\numberwithin{equation}{section}
\newtheorem{theorem}{Theorem}[section]
\newtheorem{lemma}[theorem]{Lemma}
\newtheorem{definition}[theorem]{Definition}
\newtheorem{proposition}[theorem]{Proposition}
\begin{document}
\title[\hfil Regularity for fully nonlinear nonlocal elliptic equations] {Regularity for a class of degenerate fully nonlinear nonlocal elliptic equations}
\author[Y. Fang, V.D. R\u{a}dulescu, C. Zhang]{Yuzhou Fang, Vicen\c{t}iu D. R\u{a}dulescu$^*$ and Chao Zhang}

\thanks{$^*$Corresponding author.}

\address{Yuzhou Fang\hfill\break School of Mathematics, Harbin Institute of Technology, Harbin 150001, China}
 \email{18b912036@hit.edu.cn}

\address{Vicen\c{t}iu D. R\u{a}dulescu   \hfill\break   Faculty of Applied Mathematics,
	AGH University of Science and Technology,  Krak\'{o}w 30-059, Poland \&   Department of Mathematics, University of Craiova, Craiova 200585, Romania}
\email{radulescu@inf.ucv.ro}

\address{Chao Zhang\hfill\break School of Mathematics and Institute for Advanced Study in Mathematics, Harbin Institute of Technology, Harbin 150001, China}
 \email{czhangmath@hit.edu.cn}

\subjclass[2020]{35B65; 35D40; 35J70; 35R11}   \keywords{Regularity; viscosity solution; fully nonlinear degenerate equations; nonlocal operator}

\begin{abstract}
We consider a wide class of fully nonlinear integro-differential equations that degenerate when the gradient of the solution vanishes. By using compactness and perturbation arguments, we give a complete characterization of the regularity of viscosity solutions according to different diffusion orders. More precisely, when the order of the fractional diffusion is sufficiently close to 2, we obtain H\"{o}lder continuity for the gradient of any viscosity solutions and further derive an improved gradient regularity estimate at the origin. For the order of the fractional diffusion in the interval $(1, 2)$, we prove that there is at least one solution of class $C^{1, \alpha}_{\rm loc}$. Additionally, if the order of the fractional diffusion is in the interval $(0,1]$, the local H\"{o}lder continuity of solutions is inferred.
\end{abstract}

\maketitle

\section{Introduction}
\label{sec0}

In this paper, we concentrate on the local behaviors of viscosity solutions to a quite general class of fully nonlinear nonlocal elliptic problems of the type
\begin{equation}
\label{main}
-\Phi(x,|Du|)\mathcal{I}_\sigma(u,x)=f(x)  \quad\text{in } B_1,
\end{equation}
where $B_1:=B_1(0)$ is a unit ball in the Euclidean space $\mathbb{R}^n$, $f\in C(B_1)\cap L^\infty(B_1)$, $\Phi: B_1\times[0,\infty)\rightarrow[0,\infty)$ is a continuous map possessing degeneracy as the gradient vanishes, and the nonlinear nonlocal operator $\mathcal{I}_\sigma$ is uniformly elliptic in the sense of Caffarelli and Silvestre \cite{CS09, CS11}, that is, there holds
$$
\inf_{I\in\mathcal{L}}Iv(x)\le \mathcal{I}_\sigma(u+v,x)-\mathcal{I}_\sigma (u,x)\le\sup_{I\in\mathcal{L}}Iv(x)
$$
for a family of linear operators $\mathcal{L}$.  Let $\sigma\in(0,2)$ and $0<\lambda\le\Lambda<\infty$, $\mathcal{K}$ be a family of symmetric kernels formed by the measurable functions $K:\mathbb{R}^N\setminus\{0\}\rightarrow\mathbb{R}^+$ having
$$
\lambda\frac{C_{N,\sigma}}{|x|^{N+\sigma}}\le K(x)\le \Lambda\frac{C_{N,\sigma}}{|x|^{N+\sigma}},
$$
where $C_{N,\sigma}>0$ is a normalizing constant. 
For $K\in \mathcal{K}$ and $u:\mathbb{R}^N\rightarrow\mathbb{R}$, define
$$
I_Ku(x)=\frac{1}{2}\mathrm{P.V.}\int_{\mathbb{R}^N}(u(x+y)+u(x-y)-2u(x))K(y)\,dy
$$
with the symbol P.V. meaning the Cauchy principal value. Observe that $I_Ku$ for each $K$ is well-defined if $u$ is $C^{1,1}$ at the point $x$ and fulfills adequate growth conditions at infinity, i.e.,
$$
\|u\|_{L^1_\sigma(\mathbb{R}^N)}:=\int_{\mathbb{R}^N}\frac{|u(y)|}{1+|y|^{N+\sigma}}\,dy<+\infty.
$$
At this time, we denote by $L^1_\sigma(\mathbb{R}^N)$ the set of such functions. For a two-parameter collection of kernels $\{K_{\alpha\beta}\}_{\alpha\beta}\subseteq\mathcal{K}$, the nonlinear operator $\mathcal{I}_\sigma$ can be given by
$$
\mathcal{I}_\sigma u(x)=\inf_\beta\sup_\alpha I_{K_{\alpha\beta}}u(x),
$$
which appears naturally in stochastic control problems when two or more players are involved in a competitive game, see \cite{Soner}.

In the past years, integro-differential operators have attracted increasing attention and have developed abundant theories. For the qualitative properties, Barles, Chasseigne and Imbert \cite{BCI08} considered a Dirichlet problem for elliptic integro-differential equations and provided general existence results of viscosity solutions by Perron's method. The comparison principles for second-order degenerate integro-differential equations were shown in \cite{BI08}, whereas a key ingredient was a nonlocal version of Jensen Ishii's lemma for solutions with arbitrary growth at infinity to such problems; see also \cite{JK06} for similar results on subquadratic solutions. As for the parabolic analogue, Ciomaga \cite{Cio12} verified the strong maximum principle by studying the horizontal and vertical propagation of maxima. More existence, uniqueness, and comparison results can be referred to e.g. \cite{AT96, Ama03, JK05, Pha98}.

When it comes to the quantitative properties of solutions, Bass and Levin \cite{BL08} first got, relying on probabilistic methods, Harnack inequalities for nonnegative harmonic functions concerning a class of integro-differential elliptic equations in the framework of potential theory. Then, such results were extended to the problems exhibiting more general structures in \cite{BK05, SZ04}. In \cite{Sil06}, Silvestre presented instead a purely analytical proof for H\"{o}lder continuity of harmonic functions concerning the integral operator with PDE techniques, where he also dealt with quite particular nonlinear nonlocal equations. It is worth mentioning that the previous quantitative estimates will blow up when the order of the equation goes to 2.

The first results being uniform in the degree were derived by \cite{CS09}, in which Caffarelli and Silvestre investigated general fully nonlinear integro-differential equations and demonstrated interior behaviors of solutions via a powerful Harnack approach introduced by Krylov-Safonov. To be precise, the authors justified H\"{o}lder continuity by establishing the nonlocal ABP estimate and Harnack inequality ahead of time, which leads further to $C^{1,\alpha}$-regularity under some extra assumptions on the integral kernels. Because these estimates remain uniform as the degree of operator tends to 2, they can be viewed as a natural extension of the regularity theory for PDEs. Subsequently, the same authors \cite{CS11} continued to generalize $C^{1,\alpha}$-regularity to the nonlocal equations that are not translation-invariant by utilizing compactness and perturbative argument. That is, the solutions to the equation studied are $C^{1,\alpha}$ regular, whenever this equation is uniformly close to another one with $C^{1,\alpha}$ solutions. More or less simultaneously, Barles et al. \cite{BCI11} concluded, using the Ishii-Lion's viscosity method, H\"{o}lder estimates for a large class of elliptic and parabolic integro-differential equations involving second and first-order terms; see \cite{BCCI12} concerning further Lipschitz regularity in a similar setting.

In particular, some specific cases of \eqref{main} have been explored up to now. For instance, if $\Phi(x,t)=t^\gamma$, dos Prazeres and Topp \cite{PT21} proved the local H\"{o}lder and Lipschitz estimates of viscosity solutions by following the ideas due to \cite{BCI11, BCCI12}, together with gradient H\"{o}lder continuity via an improvement of the flatness procedure for $\sigma$ close enough to 2. See also \cite{BKLT15} for the situation $\gamma<-\sigma$. Afterwards, these results were extended to the nonhomogeneous scenario, i.e., $\Phi(x,t)=t^p+a(x)t^q$, in \cite{APS24}. Especially, for any $1<\sigma<2$, the authors in \cite{APS24} discovered that there exists at least one viscosity solution being $C^{1,\alpha}$ regular under the degenerate setting $(0<p\le q)$, which is new even for $\Phi(x,t)=t^\gamma$ in \cite{PT21}. Recently, several aspects of nonlocal quasilinear equations with elliptic degeneracy could be found in \cite{APT23}, including existence, multiplicity and gradient regularity.

The investigation of model \eqref{main} is also motivated by physical contexts such as porous medium flow, dislocation dynamics and others in \cite{APS24}. On the other hand, Eq. \eqref{main} can be seen as a nonlocal counterpart to second-order fully nonlinear equations with the form
\begin{equation}
\label{z}
-\Phi(x,|Du|)F(D^2u)=f \quad \text{in } B_1,
\end{equation}
where the operator $F:\mathcal{S}^N\rightarrow\mathbb{R}$ is uniformly elliptic in the sense that
$$
\lambda\mathrm{Tr}(B)\le F(A+B)-F(A)\le\Lambda\mathrm{Tr}(B)
$$
for all $A,B\in\mathcal{S}^N$, $B\ge0$. Here $\mathcal{S}^N$ is the set of symmetric matrices. Let us point out that Eq. \eqref{z} is a nonvariational analogue of the well-known double phase equation, for instance, \cite{CM15, DeFM, JMAA, FRZ24, HO22, Mar89}. The interior and global regularity results related to this kind of equation were inferred by \cite{IFB, MZ}; see \cite{BBO24} for second derivative $L^\delta$-estimates. In addition, some special types of \eqref{z} have been extensively investigated, such as $\Phi(x,t)=t^{p(x)}+a(x)t^{q(x)}$ with $-1<p(x)\le q(x)$ and $a(x)\ge0$ in \cite{FRZ21, JLMS}. We refer the readers to \cite{Caf89, IS13, DeF21, SR20, Nas24, BPRT20} and references therein for more studies.

Inspired by the work mentioned above, in this paper we consider the fully nonlinear nonlocal elliptic equation \eqref{main} with a quite general degeneracy of the type $\Phi(\cdot, |Du|)$, which covers all the previous models. Our equation can not only encompass the $p$-growth and double-phase growth degeneracy but also include the variable exponent, $\log$-type and Orlicz double-phase growth cases. We aim at seeking some appropriate structure conditions on \eqref{main} to establish the interior $C^{0,\alpha}$, $C^{1,\alpha}$ regularity in a universal way. To begin with, by following the approximation idea described in \cite{IS13}, we give gradient H\"{o}lder continuity result under the case that $\sigma$ is sufficiently close to 2. Since the nonlocal operator $\mathcal{I}_\sigma$ at this time can approach a uniformly elliptic local operator $F$, the regularity of the solutions to \eqref{main} will be transferred from that of the $F$-harmonic functions in a suitable manner.

\begin{theorem}
\label{thm1}
Let $u\in C(\overline{B}_1)$ be a viscosity solution of \eqref{main} and the conditions $(A_1)$--$(A_5)$ (in Section \ref{sec1}) be in force. Then there is a $\sigma_0\in (1,2)$, close enough to 2, such that $u$ is locally of class $C^{1,\alpha}(B_1)$ with
$$
\|u\|_{C^{1,\alpha}(B_{1/2})}\le C\left(\|u\|_{L^\infty(B_{1})}+\|u\|_{L^1_\sigma(\mathbb{R}^N)}+\|f\|^\frac{\sigma-1}{1+s_1}_{L^\infty(B_{1})}\right),
$$
provided $\sigma\in(\sigma_0,2)$, where $0<\alpha<\min\left\{\overline{\alpha},\frac{\sigma-1}{1+s_2}\right\}$ and the positive constant $C$ depends on $N,\lambda,\Lambda,s_1,M,\overline{M},\alpha$. Here $\overline{\alpha}$ is the index related to the optimal regularity for an $F$-harmonic function.
\end{theorem}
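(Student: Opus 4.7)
The argument adapts the Imbert--Silvestre flatness improvement scheme to the degenerate nonlocal setting. After a preliminary rescaling that reduces the problem to $\|u\|_{L^\infty(B_1)}+\|u\|_{L^1_\sigma(\mathbb{R}^N)}\le 1$ and $\|f\|_{L^\infty(B_1)}\le\varepsilon_0$ for a sufficiently small $\varepsilon_0$ (a reduction that, via the $s_1$-growth of $\Phi$, produces the factor $\|f\|_{L^\infty}^{(\sigma-1)/(1+s_1)}$ in the final estimate), the goal is to construct inductively, at dyadic scales $r_k=\rho^k$ for some $\rho\in(0,1/2)$ to be fixed, an affine function $\ell_k(x)=a_k+b_k\cdot x$ satisfying
\begin{equation*}
\|u-\ell_k\|_{L^\infty(B_{r_k})}\le r_k^{1+\alpha}, \qquad |a_k-a_{k-1}|+r_{k-1}|b_k-b_{k-1}|\le C\,r_{k-1}^{1+\alpha}.
\end{equation*}
Summing the telescoping series then yields the pointwise $C^{1,\alpha}$ bound at the origin, and a translation argument lifts this to the stated $C^{1,\alpha}(B_{1/2})$ estimate.

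\textbf{Approximation lemma.} The heart of the iteration is a compactness--stability result: for every $\delta>0$ there exist $\varepsilon_0(\delta)>0$ and $\sigma_0(\delta)\in(1,2)$ such that every normalized viscosity solution of \eqref{main} with $\sigma\in(\sigma_0,2)$ and $\|f\|_{L^\infty}\le\varepsilon_0$ satisfies $\|u-h\|_{L^\infty(B_{3/4})}\le\delta$ for some solution $h$ of the limiting local problem $F(D^2h)=0$, where $F$ is the uniformly elliptic operator recovered from $\mathcal{I}_\sigma$ as $\sigma\to 2$. I would prove this by contradiction: given a violating sequence $(u_j,\sigma_j,f_j)$, the nonlocal $C^{0,\beta}$-estimates of Caffarelli--Silvestre supply equicontinuity, Arzel\`a--Ascoli produces a locally uniform limit $u_\infty$, and the stability of viscosity solutions under both uniform convergence and $\sigma_j\to 2$ identifies $u_\infty$ as an $F$-harmonic function, contradicting the assumed distance from every such $h$.

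\textbf{Inductive step and the main obstacle.} The principal difficulty is that $\Phi(x,|Du|)$ degenerates as $|Du|\to 0$, which prevents a direct application of the approximation lemma to the rescaled function at every scale. We resolve this dichotomously at each step: if $|b_k|$ is small compared to $r_k^\alpha$, the flatness $\|u-\ell_k\|_{L^\infty(B_{r_k})}\le r_k^{1+\alpha}$ permits us to retain $\ell_{k+1}=\ell_k$ without violating the induction; otherwise, the rescaling $v(x)=r_k^{-(1+\alpha)}\bigl(u(r_k x)-\ell_k(r_k x)\bigr)$ solves a nondegenerate nonlocal equation whose source scales like $r_k^{(\sigma-1)-\alpha(1+s_2)}\|f\|_{L^\infty}$, and this exponent is nonnegative exactly when $\alpha<(\sigma-1)/(1+s_2)$. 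Applying the approximation lemma to $v$ and then the $C^{1,\overline{\alpha}}$-regularity of the $F$-harmonic comparison function produces the next plane $\ell_{k+1}$ and closes the induction; the restriction $\alpha<\overline{\alpha}$ is dictated by the optimal exponent available at the limit problem.

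\textbf{Conclusion and globalization.} The telescoping control ensures $b_k\to Du(0)$ and delivers the pointwise $C^{1,\alpha}$ estimate at the origin. Since conditions $(A_1)$--$(A_5)$ are translation invariant and, for any $x_0\in B_{1/2}$, the quantities $\|u\|_{L^\infty(B_{1/2}(x_0))}$ and $\|u\|_{L^1_\sigma(\mathbb{R}^N)}$ remain under control, applying the pointwise estimate at every $x_0\in B_{1/2}$ yields the uniform $C^{1,\alpha}(B_{1/2})$ bound with the stated dependence on the data.
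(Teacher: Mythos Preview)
Your approximation lemma cannot be proved as stated. You invoke the Caffarelli--Silvestre $C^{0,\beta}$ estimates to obtain equicontinuity of the violating sequence $(u_j)$, but those estimates are for \emph{uniformly elliptic} nonlocal operators, and the equation $-\Phi(x,|Du_j|)\mathcal{I}_{\sigma_j}(u_j,x)=f_j$ is degenerate: $\Phi$ may vanish when the gradient is small, so you cannot divide through and put the equation in Pucci-extremal form. The paper closes this gap by first proving, via the Ishii--Lions doubling-of-variables method, a local Lipschitz estimate for solutions of the $\eta$-shifted equation $-\Phi(x,|Du+\eta|)\mathcal{I}_\sigma(u,x)=f$, with constants \emph{independent of} $\eta$ (Proposition~\ref{pro3-1}). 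This is the genuine compactness input and is not a corollary of existing nonlocal theory.

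Your inductive step also has a gap. In the branch ``$|b_k|$ small compared to $r_k^\alpha$'' you propose to set $\ell_{k+1}=\ell_k$; but from $\|u-\ell_k\|_{L^\infty(B_{r_k})}\le r_k^{1+\alpha}$ you only inherit $\|u-\ell_k\|_{L^\infty(B_{r_{k+1}})}\le r_k^{1+\alpha}=\rho^{-(1+\alpha)}r_{k+1}^{1+\alpha}$, which is too large by the fixed factor $\rho^{-(1+\alpha)}$, so the induction does not close. The paper avoids any dichotomy: the approximation lemma is proved for the shifted equation \eqref{2-1} and is \emph{uniform in the shift $\eta$}. Consequently, after rescaling, $v(x)=r_k^{-(1+\alpha)}(u(r_kx)-\ell_k(r_kx))$ solves an equation with shift $\eta=r_k^{-\alpha}b_k$ of arbitrary size, and the approximation lemma applies regardless. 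The price is that the stability argument in the approximation lemma must handle the possibilities $|\eta_k|\to\infty$, $\eta_k\to\bar\eta$ with $p+\bar\eta\neq 0$, and the degenerate case $p+\bar\eta=0$ separately; the last case is nontrivial and requires perturbing the test function by $\gamma|P_T(x)|$ (projection onto the positive eigenspace of $D^2\varphi$) to force a nonzero gradient, as in Imbert--Silvestre. Without this uniform-in-$\eta$ version of the approximation lemma, the iteration does not run.
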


The constant $C$ in Theorem \ref{thm1} is uniform in $\sigma$, that is, it will not blow up as $\sigma\rightarrow2$. The result stated in the theorem above is also sharp owing to an example given by \cite{PT21} in the spirit of \cite{ART15}. Nonetheless, saying simply that solutions are $C^{1,\alpha}$ regular does not tell the whole story. For instance, if $\Phi(x,t)=t^p$ and $\mathcal{I}_\sigma$ takes the fractional Laplacian, for each $\beta\in(0,\sigma-1)$, a function $u(x)=|x|^{1+\beta}$ admits
$$
|Du|^p(-\Delta)^\frac{\sigma}{2}u(x)=C|x|^{(1+\beta)(1+p)-(\sigma+p)},
$$
see \cite{DelQ23}. Letting $\theta=(1+\beta)(1+p)-(\sigma+p)$, we can find that $\theta$ is smaller than or equal to the degenerate rate $p$, and that $u$ is $C^{1,\frac{\sigma-1+\theta}{1+p}}$ at the origin (a critical point of $u$); see \cite{Nas24} for the local version. This suggests that the $C^{1,\alpha}$ regularity of $u$ can be surpassed, at least at some meaningful point, if H\"{o}lder exponent of the right-hand side is larger than the growth rate of gradient term in the equation. Therefore, we can further deduce an improved gradient estimate for the scenario $\Phi(x,t)=t^p$, which is new as far as we know. It should be noted that any universal constant mentioned in this paper  means that it might depend only on the parameters related to Eq. \eqref{main} itself.

\begin{theorem}
\label{thm2}
Let the assumptions $(A_1),(A_2),(A_5)$ be in force. Suppose that $u\in C(\overline{B}_1)$ is a viscosity solution of $|Du|^p\mathcal{I}_\sigma(u,x)=f(x)$ with the source term $|f(x)|\le\mathfrak{m}|x|^\theta$ for $\theta\in(0,1)$. Then one can find a $1<\sigma_0<2$ (sufficiently close to 2) such that $u$ is of class $C^{1,\min\left\{\overline{\alpha}^-,\frac{\sigma-1+\theta}{1+p}\right\}}$ at the origin with the estimate
$$
|u(x)-u(0)-Du(0)\cdot x|\le C|x|^{1+\alpha},  \quad x\in B_{\frac{1}{8}}
$$
for any
$$
\alpha\in (0,\overline{\alpha})\cap\left(0,\frac{\sigma-1+\theta}{1+p}\right],
$$
provided $\sigma_0<\sigma<2$. Here $\overline{\alpha}$ is an exponent corresponding to the optimal regularity of an $F$-harmonic function, and $C>0$ depends upon $\alpha$ and universal parameters.
\end{theorem}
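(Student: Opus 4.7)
The plan is to prove the pointwise $C^{1,\alpha}$ estimate at the origin via the classical discrete flatness-improvement iteration, adapted to the degenerate scaling of $|Du|^p\mathcal{I}_\sigma$. After the standard normalizations ($\|u\|_{L^\infty(\mathbb{R}^N)}\le 1$, $u(0)=0$, and $\mathfrak{m}$ as small as needed, obtained by multiplicative rescaling of $u$), fix $\alpha\in(0,\overline{\alpha})\cap(0,\frac{\sigma-1+\theta}{1+p}]$ and choose $\rho\in(0,1/2)$ so that $C_h\rho^{\overline{\alpha}-\alpha}\le 1/2$, where $C_h$ is the universal constant in the $C^{1,\overline{\alpha}}$ estimate for $F$-harmonic functions. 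The goal is to construct inductively a sequence of affine functions $\ell_k(x)=a_k+b_k\cdot x$ satisfying the \emph{flatness} bound
$$
\sup_{B_{\rho^k}}|u-\ell_k|\le \rho^{k(1+\alpha)}
$$
together with the \emph{Cauchy} bound $|a_k-a_{k-1}|+\rho^{k-1}|b_k-b_{k-1}|\le C\rho^{k(1+\alpha)}$. Once this is secured, $(a_k,b_k)$ converges geometrically to some $(u(0),Du(0))$ and the desired pointwise estimate on $B_{1/8}$ follows by interpolation between dyadic scales.

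For the inductive step, introduce
$$
v_k(y):=\rho^{-k(1+\alpha)}(u-\ell_k)(\rho^k y),\qquad y\in B_1.
$$
Since $\mathcal{I}_\sigma$ annihilates affine functions and scales with homogeneity $\sigma$, and $|f(\rho^k y)|\le \mathfrak{m}\rho^{k\theta}|y|^\theta$, a direct calculation shows that $v_k$ solves
$$
|\tilde b_k+Dv_k|^p\,\mathcal{I}_\sigma(v_k,y)=\rho^{k[(\sigma-1+\theta)-\alpha(1+p)]}\tilde f_k(y)\quad\text{in } B_1,
$$
where $\tilde b_k:=\rho^{-k\alpha}b_k$ and $\|\tilde f_k\|_{L^\infty(B_1)}\le \mathfrak{m}$. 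The condition $\alpha(1+p)\le \sigma-1+\theta$ forces the prefactor to stay bounded by $1$, so $v_k$ falls into the class of Theorem \ref{thm1} uniformly in $k$, with $\|v_k\|_{L^\infty(B_1)}\le 1$ from the inductive flatness and with $L^1_\sigma$-tail controlled by a telescoping argument using the Cauchy bound at previous scales.

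Applying the compactness--perturbation lemma behind Theorem \ref{thm1}, upon shrinking $\mathfrak{m}$ further one finds an $F$-harmonic function $h$ with $\sup_{B_{3/4}}|v_k-h|\le \delta$, where $\delta$ is preselected so that $\delta+C_h\rho^{1+\overline{\alpha}}\le \rho^{1+\alpha}$. Since $h$ is $C^{1,\overline{\alpha}}$ at $0$ with universal constant $C_h$,
$$
\sup_{B_\rho}\bigl|v_k(y)-h(0)-Dh(0)\cdot y\bigr|\le \delta+C_h\rho^{1+\overline{\alpha}}\le \rho^{1+\alpha}.
$$
Setting $\ell_{k+1}(x):=\ell_k(x)+\rho^{k(1+\alpha)}\bigl[h(0)+Dh(0)\cdot(x/\rho^k)\bigr]$ yields $a_{k+1}-a_k=\rho^{k(1+\alpha)}h(0)$ and $b_{k+1}-b_k=\rho^{k\alpha}Dh(0)$, so both inductive bounds persist at step $k+1$ thanks to the universal control of $|h(0)|$ and $|Dh(0)|$.

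The main obstacle is the compactness step itself: the rescaled drift $\tilde b_k$ may remain bounded or may blow up along the iteration, forcing a dichotomy. When $|\tilde b_k|$ stays bounded, the equation for $v_k$ is genuinely degenerate and Theorem \ref{thm1} supplies the uniform $C^{1,\alpha}$ compactness needed to extract an $F$-harmonic limit; when $|\tilde b_k|$ is large, dividing by $|\tilde b_k|^p$ produces a uniformly elliptic integro-differential equation with vanishing source, to which the Caffarelli--Silvestre $C^{1,\overline{\alpha}}$ theory applies directly. Unifying both regimes with constants independent of $k$ and stable as $\sigma\uparrow 2$, while simultaneously keeping the $L^1_\sigma$-tails of $\{v_k\}$ under control via the Cauchy bound, is the delicate technical core of the argument.
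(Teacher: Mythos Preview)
Your scheme is correct but takes a different route from the paper. You run the standard affine flatness iteration (essentially re-running Lemma~\ref{lem3-3} specialized to $\Phi(x,t)=t^p$), the key observation being that the hypothesis $|f(x)|\le\mathfrak m|x|^\theta$ produces, after the rescaling, a source bounded by $\rho^{k[(\sigma-1+\theta)-\alpha(1+p)]}\mathfrak m$, so the iteration closes throughout the enlarged range $\alpha\le\frac{\sigma-1+\theta}{1+p}$ rather than only $\alpha\le\frac{\sigma-1}{1+p}$. Note that the approximation Lemma~\ref{lem3-2} already holds for \emph{arbitrary} drift $\eta=\tilde b_k$ (its $\kappa$ is independent of $\eta$), so your internal dichotomy on the size of $|\tilde b_k|$ is unnecessary; compactness comes from the uniform Lipschitz bound of Proposition~\ref{pro3-1}, not from Theorem~\ref{thm1} itself. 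The paper proceeds differently: it proves a new compactness lemma (Lemma~\ref{lem3-4}) that uses the extra smallness $|Du(0)|\le\kappa$ to force the limiting $F$-harmonic function to satisfy $h(0)=|Dh(0)|=0$, which permits a \emph{zero-affine} iteration $\sup_{B_{\rho^k}}|u|\le\rho^{k(1+\alpha)}$ without subtracting tangent planes (Lemmas~\ref{lem3-5}--\ref{lem3-6}). The dichotomy then appears once, \emph{outside} the iteration: one sets $\tau=(|Du(0)|/\kappa)^{1/\alpha}$, applies the zero-affine iteration on scales $t\ge\tau$, and on scales $t<\tau$ rescales so that $|Du_\tau(0)|=\kappa$ is bounded away from zero, turning the equation into a genuinely uniformly elliptic one near the origin to which the Caffarelli--Silvestre $C^{1,\overline\alpha^-}$ theory applies. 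Your approach is more parallel to the proof of Theorem~\ref{thm1} and avoids introducing a new compactness lemma; the paper's approach isolates the role of $Du(0)$ explicitly and cleanly separates the degenerate scales from the non-degenerate ones.
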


In the general case $1<\sigma<2$, we could not apply directly the method used to prove Theorem \ref{thm1}, because $\mathcal{I}_\sigma$ may not approximate a uniformly elliptic local operator. However, similar to \cite[Theorem 1.1]{APS24}, we still verify that there exists at least one solution $u$ to \eqref{main} satisfying $u\in C^{1,\alpha}_{\rm loc}(B_1)$ for some $\alpha\in(0,1)$, which is stated as below.

\begin{theorem}
\label{thm3}
If $\sigma\in(1,2)$ and the assumptions $(A_1)$--$(A_4)$ hold true, we can find such a viscosity solution $u\in C(\overline{B}_1)$ of \eqref{main} that $u$ is $C^{1,\alpha}_{\rm loc}(B_1)$ regular and
$$
\|u\|_{C^{1,\alpha}(B_{1/2})}\le C\left(\|u\|_{L^\infty(B_{1})}+\|u\|_{L^1_\sigma(\mathbb{R}^N)}+\|f\|^\frac{\sigma-1}{1+s_1}_{L^\infty(B_{1})}\right),
$$
where $0<\alpha<\min\left\{\hat{\alpha}, \frac{\sigma-1}{1+s_2}\right\}$ and $C>0$ depends on $\alpha$ and universal constant. 
Here $\hat{\alpha}$ is the index related to the regularity of $\mathcal{I}_\sigma$-harmonic functions.
\end{theorem}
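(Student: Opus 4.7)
The plan is to construct the desired $C^{1,\alpha}_{\rm loc}$ solution as the limit of solutions to a family of non-degenerate approximating problems, and to establish the regularity by proving a uniform $C^{1,\alpha}$ estimate via an iterative flatness improvement. First, for each $\epsilon\in(0,1)$ I would consider the regularized Dirichlet problem
\[
-\bigl(\Phi(x,|Du_\epsilon|)+\epsilon\bigr)\mathcal{I}_\sigma(u_\epsilon,x)=f(x)\quad\text{in } B_1,\qquad u_\epsilon=g\text{ on }\mathbb{R}^N\setminus B_1,
\]
with a fixed admissible exterior datum $g$. Since the coefficient is bounded below by $\epsilon$, the resulting nonlocal operator is uniformly elliptic in the Caffarelli--Silvestre sense, so Perron's method from \cite{BCI08} together with the comparison principle of \cite{BI08} yields a viscosity solution $u_\epsilon\in C(\overline{B}_1)$, and comparison with explicit barriers gives $\|u_\epsilon\|_{L^\infty(B_1)}\le C$ independent of $\epsilon$. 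A uniform Hölder modulus for $\{u_\epsilon\}$ follows from the Ishii--Lions type estimates of \cite{BCI11} applied after an appropriate rescaling, providing precompactness in $C_{\rm loc}(B_1)$.

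The heart of the argument is to establish, independently of $\epsilon$, an iterative flatness improvement: at each point $x_0\in B_{1/2}$, I aim to produce affine functions $\ell_k(x)=a_k+p_k\cdot(x-x_0)$ and a universal $\rho\in(0,1)$ with
\[
\sup_{B_{\rho^k}(x_0)}|u_\epsilon-\ell_k|\le \rho^{k(1+\alpha)},\qquad |p_{k+1}-p_k|+|a_{k+1}-a_k|\le C\rho^{k\alpha},
\]
for $\alpha<\min\{\hat{\alpha},(\sigma-1)/(1+s_2)\}$. The induction splits according to the size of $|p_k|$. In the \emph{non-degenerate} regime $|p_k|\ge\mu$ (for a small universal $\mu$), dividing through by $\Phi(\cdot,|p_k|)+\epsilon$ recasts the equation as a small perturbation of an $\mathcal{I}_\sigma$-harmonic equation, whose solutions are $C^{1,\hat{\alpha}}_{\rm loc}$, and a contradiction-compactness argument would deliver $\ell_{k+1}$. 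In the \emph{degenerate} regime $|p_k|<\mu$, the rescaled function is nearly constant, so I would take $p_{k+1}=0$ and absorb the oscillation into $a_{k+1}$, in the spirit of \cite{IS13}. Matching the scaling of the source term across the two regimes — the right-hand side scales as $\rho^{k[(\sigma-1)-\alpha(1+s_2)]}$ after renormalization — is precisely what forces the admissible range of $\alpha$.

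Once the uniform $C^{1,\alpha}$ bound is in place, a diagonal extraction produces $u_{\epsilon_j}\to u$ in $C^1_{\rm loc}(B_1)$. The stability of viscosity solutions under locally uniform convergence, combined with control of the $L^1_\sigma$-tail, ensures that $u$ is a viscosity solution of \eqref{main} and inherits the claimed $C^{1,\alpha}$ estimate together with the $L^\infty$ bound depending on $\|u\|_{L^\infty(B_1)}$, $\|u\|_{L^1_\sigma(\mathbb{R}^N)}$ and $\|f\|_{L^\infty(B_1)}^{(\sigma-1)/(1+s_1)}$.

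The main obstacle I foresee is the contradiction-compactness step inside the non-degenerate regime. The blow-up sequence
\[
v_k(y):=\rho^{-k(1+\alpha)}\bigl(u_\epsilon(x_0+\rho^k y)-\ell_k(x_0+\rho^k y)\bigr)
\]
has tails growing with $k$ because $\ell_k$ has nonzero slope, so one has to truncate $v_k$ before passing to a limit and show that the tail contribution to $\mathcal{I}_\sigma v_k$ is negligible. Simultaneously one must verify that the rescaled coefficient $(\Phi(\cdot,|p_k|)+\epsilon)/\Phi(x_0,|p_k|)$ converges to $1$ uniformly on compact sets — this is where the continuity hypotheses $(A_3)$--$(A_4)$ on $\Phi$ become essential — and that the renormalized kernel class remains inside the uniformly elliptic family so that the limit equation is genuinely $\mathcal{I}_\sigma$-harmonic and its known $C^{1,\hat{\alpha}}$ regularity can be invoked.
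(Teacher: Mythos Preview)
Your overall strategy---regularize, prove uniform $C^{1,\alpha}$ estimates, pass to the limit---is the same as the paper's, but the iterative step has a genuine gap in what you call the degenerate regime. When $|p_k|<\mu$ there is no reason for the rescaled function $v_k$ to be ``nearly constant'': the induction hypothesis gives only $\|v_k\|_{L^\infty(B_1)}\le 1$, and without control on the equation satisfied by $v_k$ its oscillation at scale $\rho$ can be of order one, so setting $p_{k+1}=0$ will not produce the decay $\sup_{B_\rho}|v_k-a_{k+1}|\le\rho^{1+\alpha}$. This is also not ``in the spirit of \cite{IS13}'': that paper never splits by the size of the slope; it proves a single compactness lemma valid for \emph{all} drift vectors $\eta$ via the $|P_T(x)|$ perturbation trick, which is precisely what the present paper reproduces in Lemma~\ref{lem3-2} when $\sigma$ is close to~$2$.

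For general $\sigma\in(1,2)$ that trick is not available, and the paper's remedy is structural rather than a case split. It introduces the class of \emph{approximated viscosity solutions} (Definition~\ref{def2}): limits of solutions $u_k$ of $-\Phi(x,|Du_k+\eta_k|+d_k)\mathcal{I}(u_k,x)=f$ with $d_k\to 0$ but $k d_k^{s_2}\to\infty$. This class is closed under the affine rescalings of the iteration (for arbitrary slope $p_n$, the rescaled $\tilde d_k=d_k/\rho^{n\alpha}$ still satisfies the required conditions), and it enjoys the stability Lemma~\ref{lem4-2}: along the diagonal sequence one can always divide, obtaining a right-hand side bounded by $\overline{M}M/(k d_k^{s_2})\to 0$, so the limit is $\mathcal{I}_\sigma$-harmonic regardless of whether the drift is large or small. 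This yields one approximation lemma (Lemma~\ref{lem4-5}) with no dichotomy on $|p_k|$, after which the iteration (Lemma~\ref{lem4-6}) closes exactly as in Lemma~\ref{lem3-3}. Your regularization $\Phi+\epsilon$ does not provide this mechanism: once you rescale, $\epsilon$ is a fixed constant while the rescaled $\Phi$ may degenerate, and since you want estimates uniform in $\epsilon$ you cannot use the $\epsilon$ lower bound; the regularization has to interact with the compactness scale, which is exactly what the choice $d_k=k^{-1/(2s_2)}$ in Proposition~\ref{pro4-4} accomplishes.
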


Finally, let us present lower-order regularity shown by the Ishii-Lion's method in \cite{BCCI12, BCI11} under the case $0<\sigma\le1$.

\begin{theorem}
\label{thm4}
Suppose that the conditions $(A_1)$--$(A_4)$ are satisfied. Let $u\in C(\overline{B}_1)$ be a viscosity solution to 
\eqref{main}. Then,
\begin{itemize}
  \item[(1)] for $\sigma\in(0,1)$, $u\in C^{0,\sigma}_{\rm loc}(B_1)$ and
  $$
  \|u\|_{C^{0,\sigma}(B_{1/2})}\le C\left(\|u\|_{L^\infty(B_1)}+\|u\|_{L^1_\sigma(\mathbb{R}^N)}+\|f\|_{L^\infty(B_1)}\right)
  $$
  with $C$ depending on $\sigma,N,\lambda,\Lambda,M,\overline{M},s_1$;

    \smallskip

  \item[(2)] for $\sigma=1$, $u\in C^{0,\alpha}_{\rm loc}(B_1)$ with any $\alpha\in(0,1)$, and
  $$
  \|u\|_{C^{0,\alpha}(B_{1/2})}\le C \left(\|u\|_{L^\infty(B_1)}+\|u\|_{L^1_\sigma(\mathbb{R}^N)}+\|f\|_{L^\infty(B_1)}\right)
  $$
  with $C$ depending on $\alpha,N,\lambda,\Lambda,M,\overline{M},s_1$.
  \end{itemize}
\end{theorem}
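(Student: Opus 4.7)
The plan is to follow the Ishii-Lions doubling-of-variables method of \cite{BCI11, BCCI12}. Fix $x_0\in B_{1/2}$ and, for two large parameters $L, M_0>0$ to be chosen, consider the auxiliary function
$$
\Psi(x,y) \;=\; u(x) - u(y) - L\phi(|x-y|) - M_0\bigl(|x-x_0|^2 + |y-x_0|^2\bigr),
$$
where the concave modulus is $\phi(t)=t^\sigma$ in case (1) and $\phi(t)=t^\alpha$ for the prescribed $\alpha\in(0,1)$ in case (2). Showing $\Psi\leq 0$ on $\overline{B}_1\times\overline{B}_1$ for $L$ larger than a universal threshold yields the desired local Hölder estimate around $x_0$; the stated inequality then follows by a standard covering and normalization in which $\|u\|_{L^\infty(B_1)}$, $\|u\|_{L^1_\sigma(\mathbb{R}^N)}$ and $\|f\|_{L^\infty(B_1)}$ enter as scaling factors. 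Arguing by contradiction, suppose $\Psi$ attains a strictly positive maximum at some interior pair $(\bar x,\bar y)$ with $\bar x\neq\bar y$; the quadratic penalization combined with the $L^\infty$ bound on $u$ forces $\bar x,\bar y$ to lie in a small neighborhood of $x_0$.

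The nonlocal Jensen-Ishii lemma (as in \cite{BI08, CS09}) then furnishes viscosity evaluations of $\mathcal{I}_\sigma$ at $\bar x$ (from the subsolution side) and at $\bar y$ (from the supersolution side), with test gradients
$$
p_x = L\phi'(|\bar x-\bar y|)\,e + 2M_0(\bar x - x_0), \qquad
p_y = L\phi'(|\bar x-\bar y|)\,e - 2M_0(\bar y - x_0),
$$
where $e=(\bar x-\bar y)/|\bar x-\bar y|$. For $L$ large, $|p_x|$ and $|p_y|$ are large, so the lower growth of $\Phi$ in the $t$-variable embedded in $(A_2)$ (responsible for the $s_1$-dependence in the constant) gives $\Phi(\bar x,|p_x|),\Phi(\bar y,|p_y|)>0$. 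Dividing the viscosity inequalities by $\Phi$ recasts them as Pucci-type nonlocal bounds
$$
\mathcal{M}^{+}_{\mathcal L}\, u(\bar x) \;\geq\; -\frac{\|f\|_{L^\infty(B_1)}}{\Phi(\bar x,|p_x|)}, \qquad
\mathcal{M}^{-}_{\mathcal L}\, u(\bar y) \;\leq\; \frac{\|f\|_{L^\infty(B_1)}}{\Phi(\bar y,|p_y|)}.
$$

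Subtracting these and using $\Psi(\bar x+z,\bar y+z)\leq \Psi(\bar x,\bar y)$ to bound the nonlocal integrand pointwise by an increment of $\phi$, one splits the integral at a scale $\delta$ with $|\bar x-\bar y|\ll\delta\ll 1$. The outer piece $|z|\geq\delta$ is absorbed by $\|u\|_{L^\infty(B_1)}+\|u\|_{L^1_\sigma(\mathbb{R}^N)}$ together with the polynomial decay of the kernels. The inner piece $|z|<\delta$ is the decisive one: strict concavity of $\phi$ makes the central second difference of $\phi(|\cdot-\bar y|)$ at $\bar x$ negative on a set of positive measure, which, after comparing with the analogous difference at $\bar y$, translates into a quantitatively negative contribution to $\mathcal{M}^{+}_{\mathcal L} u(\bar x)-\mathcal{M}^{-}_{\mathcal L} u(\bar y)$ of size proportional to $L$. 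Choosing $L$ universally large relative to $\|f\|_{L^\infty}/\inf\Phi(\cdot,|p|)$ and $M_0$ delivers the contradiction.

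The main obstacle is that, for $\sigma\leq 1$, there is no second-order ellipticity available: the entire gain must be extracted from the concavity of the nonlocal modulus paired with the symmetric jump kernel. This forces a careful balance between the cutoff scale $\delta$, the exponent of $\phi$, and the universal lower bound on $\Phi(\cdot,|p|)$ coming from $(A_2)$. In the borderline case $\sigma=1$ the modulus $\phi(t)=t^\alpha$ is only admissible for $\alpha<1$, and the corresponding constant blows up as $\alpha\to 1^-$, which is consistent with the statement and with the absence of a Lipschitz estimate at the critical order.
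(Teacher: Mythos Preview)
Your proposal is correct and follows essentially the same Ishii--Lions route as the paper: the same choice of modulus $\phi(t)=t^\sigma$ (resp.\ $t^\alpha$), the same reduction to nonlocal viscosity inequalities, the same use of the lower bound on $|p_x|,|p_y|$ to divide out $\Phi$, and the same reliance on the Barles--Chasseigne--Imbert estimates for the nonlocal second differences. Two minor points: the paper localizes with an asymmetric cutoff term $h(y)=(\mathrm{osc}_{B_1}u+1)\psi(y)$ rather than your symmetric quadratic penalty (both are standard variants), and the lower growth of $\Phi$ you invoke comes from $(A_3)$--$(A_4)$, not $(A_2)$.
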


This paper is organized as follows. In Section \ref{sec1}, we first collect some basic notations, notions and give the hypotheses on \eqref{main}. Sections \ref{sec2} and \ref{sec3} are devoted to showing the $C^{1,\alpha}$-regularity properties for \eqref{main}  under the scenarios that $\sigma\in(\sigma_0,2)$ and $\sigma\in(1,2)$ separately. Finally, H\"{o}lder continuity of solutions to \eqref{main} is justified for $\sigma\in(0, 1]$ in Section \ref{sec4}.

\section{Preliminaries}
\label{sec1}

In this section, we give the definitions of viscosity solutions and present some structure conditions on Eq. \eqref{main}.

\subsection{Assumptions}
Throughout this paper, we always give the following main hypotheses:

\smallskip

\begin{itemize}
\item [($A_1$)] we suppose $u\in L^1_\sigma(\mathbb{R}^N)$;

\smallskip

\item [($A_2$)] The source term $f$ belongs to $C(B_1)\cap L^\infty(B_1)$;

\smallskip

\item [($A_3$)] The function $\Phi:B_1\times[0,\infty)\rightarrow[0,\infty)$ is continuous and there is a constant $\overline{M}\ge 1$ such that $\overline{M}^{-1}\le \Phi(x,1)\le \overline{M}$ for any $x\in B_1$;

\smallskip

\item [($A_4$)] For $\Phi$, there exist constants $0\le s_1\le s_2$ such that for every $x\in B_1$ the map $t\rightarrow \frac{\Phi(x,t)}{t^{s_1}}$ is almost increasing and the map $t\rightarrow \frac{\Phi(x,t)}{t^{s_2}}$ is almost decreasing with constant $M\ge1$;

\smallskip

\item [($A_5$)] Let $\{K_{ij}\}_{ij}\subset \mathcal{K}$ be a set of kernels such that there exist a collection of numbers $\{k_{ij}\}\subset[\lambda,\Lambda]$ and a modulus of continuity $\omega$ fulfilling
    $$
    \left|K_{ij}|x|^{N+\sigma}-k_{ij}\right|\le\omega(|x|) \quad\text{for } |x|\le1.
    $$
\end{itemize}

Here a function $u:[0,\infty)\rightarrow[0,\infty)$ is almost decreasing or almost increasing if there is a constant $M\ge1$ such that $u(\tau)\le Mu(t)$ or $u(t)\le Mu(\tau)$ respectively for $0\le t< \tau$. The last assumption above assures the nonlocal operator $\mathcal{I}_\sigma$ approaches a local fully nonlinear operator $F$ with uniform ellipticity as $\sigma$ goes to 2. Concerning the derivation of conditions ($A_3$), ($A_4$), we mainly adopt the same hypotheses on $\Phi$ introduced in \cite{IFB}. Additionally, from a variational perspective, H\"{a}st\"{o} and Ok \cite{HO22} considered the gradient H\"{o}lder continuity for local minimizers of the energy functional
$$
u\mapsto\int_{B_1}\varphi(x,|Du|)\,dx
$$
with $\varphi:B_1\times[0,\infty)\rightarrow[0,\infty)$, where the key assumptions on the integral density $\varphi$ are that there exist two numbers $p,q>1$ such that $t\rightarrow \frac{\Phi(x,t)}{t^{p}}$ is almost non-decreasing and $t\rightarrow \frac{\Phi(x,t)}{t^{q}}$ is almost non-increasing. In this respect, ($A_3$) and ($A_4$) in \eqref{main} are also reasonable. Finally, let us mention that the conditions ($A_3$), ($A_4$) cover some remarkable cases besides $\Phi(x,t)=t^p+a(x)t^q$ or $\Phi(x,t)=t^{p(x)}+a(x)t^{q(x)}$, such as
\begin{itemize}
	\item $\Phi(x,t)=t^p+a(x)t^p\log(e+t)$.
	
	\smallskip
	
	\item $\Phi(x,t)=\phi(t)+a(x)\varphi(t)$ with suitable $N$-functions $\phi,\varphi$, where the function $0\le a(x)\in C(B_1)$.
	
\end{itemize}

\subsection{Notions}

Next, we give the concepts of viscosity solutions and so-called approximated viscosity solutions. If not important or not confused, we omit the subscript $\sigma$ of the operator $\mathcal{I}$ in the rest of the work. For generality, we define the solutions of a variety of \eqref{main} as below:
\begin{equation}
\label{2-1}
-\Phi(x,|Du+\eta|)\mathcal{I}(u,x)=f(x)  \quad\text{in } B_1,
\end{equation}
where the vector $\eta\in\mathbb{R}^N$. In what follows, for $u:\mathbb{R}^N\rightarrow\mathbb{R}$ and $K\in\mathcal{K}$, we define
$$
I_{K}[\Omega](u,x)=C_{N,\sigma}\mathrm{P.V.}\int_\Omega(u(x+y)-u(x))K(y)\,dy
$$
with $\Omega\subseteq\mathbb{R}^N$ a measurable set. 

\begin{definition}[viscosity solutions]
\label{def1}
We call the function $u\in C(\overline{B}_1)\cap L^1_\sigma(\mathbb{R}^N)$ a viscosity supersolution (subsolution) to \eqref{2-1}, whenever for any $x_0\in B_1$ and any $\varphi(x)\in C^2(\mathbb{R}^N)$ such that $u-\varphi$ reaches a local minimum (maximum) at $x_0$, there holds that
$$
-\Phi(x_0,|D\varphi(x_0)+\eta|)\mathcal{I}^\delta(u,\varphi,x_0)\ge (\le) f(x_0),
$$
where we denote
$$
\mathcal{I}^\delta(u,\varphi,x_0)=\inf_j\sup_i\left(I_{K_{ij}}[B_\delta](\varphi,x_0)+I_{K_{ij}}[B^c_\delta](u,x_0)\right)
$$
with $B_\delta$ being a neighborhood of $x_0$ and $B^c_\delta$ meaning the complement of $B_\delta$ in $\mathbb{R}^N$. Here $\{K_{ij}\}_{ij}\subset \mathcal{K}$ is a set of kernels. A function $u$ is called a viscosity solution to \eqref{2-1} when it is viscosity supersolution and subsolution simultaneously.
\end{definition}

The following approximated viscosity solutions play a fundamental role in proving Theorem \ref{thm3}.

\begin{definition}[Approximated viscosity solutions]
\label{def2}
The function $u\in C(\overline{B}_1)\cap L^1_\sigma(\mathbb{R}^N)$ is called an approximated viscosity solution to \eqref{2-1}, 
if there exist sequences of functions $\{u_k\}\subset C(B_1)\cap L^1_\sigma(\mathbb{R}^N)$, of vectors $\{\eta_k\}\subset\mathbb{R}^N$, of numbers $\{d_k\}\subset\mathbb{R}^+$ and $\mu>0$, fulfilling that $\eta_k\rightarrow\eta$, $d_k\rightarrow0$, $u_k\rightarrow u$ locally uniformly in $B_1$ as $k\rightarrow\infty$ and $|u_k|\le \mu(1+|x|^{1+\alpha})$ with $1+\alpha\in(0,\sigma)$, such that $u_k$ is a viscosity solution of
\begin{equation}
\label{2-2}
-\Phi(x,|Du_k+\eta_k|+d_k)\mathcal{I}(u_k,x)=f(x)  \quad\text{in } B_1.
\end{equation}
In addition, we take $\eta_k=0$ for any $k\in\mathbb{N}$ if $\eta=0$, and $d_k$ satisfies $kd_k^{s_2}\rightarrow\infty$ when $0\le s_1\le s_2$.
\end{definition}

\section{$C^{1,\alpha}$-regularity under a smallness condition on $2-\sigma$}
\label{sec2}

With the precondition that $2-\sigma$ is small enough, we in this part are going to show the gradient H\"{o}lder continuity for viscosity solutions of \eqref{main} and establish a Schauder-type regularity estimate at the origin. To make use of the compactness method, now we first utilize the Ishii-Lions method and elliptic estimates ``in the direction of gradient" presented in \cite{BCCI12, BCI11} to prove the local Lipschitz continuity of solutions to problem \eqref{2-1}.

\subsection{Gradient H\"{o}lder regularity}

\begin{proposition}
\label{pro3-1}
Let $\sigma\in(1,2)$ and the assumptions $(A_1)$, $(A_3)$, $(A_4)$ be in force. Suppose that $u\in C(\overline{B}_1)$ is a viscosity solution to \eqref{2-1}. Then $u$ is locally Lipschitz continuous in $B_1$, that is, there exists a constant $C_{\rm lip}\ge1$ depending on $\|u\|_{L^\infty(B_1)}$, $\|u\|_{L^1_\sigma(\mathbb{R}^N)}$, $\|f\|_{L^\infty(B_1)},N,\lambda,\Lambda,s_1,\sigma,M,\overline{M}$, but not on $\eta$, such that
$$
|u(x)-u(y)|\leq C_{\rm lip}|x-y|  
$$
for every $x,y\in B_{\frac{1}{2}}$. Moreover, the Lipschitz constant $C_{\rm lip}$ is uniformly bounded as $\sigma\rightarrow2$.
\end{proposition}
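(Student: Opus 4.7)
My approach follows the Ishii--Lions doubling-of-variables technique developed for nonlocal equations in \cite{BCCI12, BCI11}, adapted to absorb the degeneracy of the coefficient $\Phi(\cdot,|Du+\eta|)$ through hypotheses $(A_3)$ and $(A_4)$. Fixing $x_0\in B_{1/2}$ and a small radius $r>0$, I introduce on $\overline{B}_r(x_0)\times\overline{B}_r(x_0)$ the doubling functional
$$
\Psi(x,y)=u(x)-u(y)-L_1\phi(|x-y|)-L_2\bigl(|x-x_0|^2+|y-x_0|^2\bigr),
$$
where $\phi\in C^2((0,\infty))$ is concave with $\phi(0)=0$, $\phi'(0)=1$ and $\phi''<0$ (for example $\phi(r)=r-\omega r^{3/2}$ suitably truncated). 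The constant $L_2$ is chosen large (depending only on $\|u\|_{L^\infty(B_1)}$) so that any positive maximum of $\Psi$ is attained at an interior point, and I argue by contradiction, assuming $\sup\Psi>0$ for an arbitrarily large coupling $L_1$. At such a maximizer $(\bar x,\bar y)$ one has $\bar x\neq \bar y$, and $|\bar x-\bar y|\to 0$ as $L_1\to\infty$.

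Freezing one variable at a time provides two admissible test functions $\varphi_1,\varphi_2$, whose gradients are essentially $\pm L_1\phi'(|\bar x-\bar y|)\,e$ with $e=(\bar x-\bar y)/|\bar x-\bar y|$, up to an $O(L_2 r)$ correction. Feeding them into Definition~\ref{def1} and applying the nonlocal Jensen--Ishii lemma, I obtain the pair of inequalities
$$
-\Phi(\bar x,|D\varphi_1(\bar x)+\eta|)\,\mathcal I^{\delta}(u,\varphi_1,\bar x)\le f(\bar x),\qquad -\Phi(\bar y,|D\varphi_2(\bar y)+\eta|)\,\mathcal I^{\delta}(u,\varphi_2,\bar y)\ge f(\bar y).
$$
The heart of the argument is the Ishii--Lions ellipticity estimate ``in the direction of the gradient'': expanding the kernels against $\varphi_1-\varphi_2$ on a small ball of radius comparable to $|\bar x-\bar y|$, the concavity of $\phi$ produces a contribution bounded above by $-c\,L_1|\phi''(|\bar x-\bar y|)||\bar x-\bar y|^{2-\sigma}$ in the $e$-direction, while the ``perpendicular'' $(N-1)$ directions contribute only the benign quantity $C L_1 \phi'(|\bar x-\bar y|)|\bar x-\bar y|^{1-\sigma}$. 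The far-field pieces are common and cancel out. The normalizing factor $C_{N,\sigma}$ built into $\mathcal K$ keeps these bounds uniform as $\sigma\to 2$.

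To close the argument I need the common coefficient $\Phi$ to drop out. Because $D\varphi_1(\bar x)$ and $D\varphi_2(\bar y)$ differ only by $O(L_2 r)$ and both have magnitude comparable to $L_1$, the almost-monotonicity in $(A_4)$ together with the normalization $(A_3)$ gives
$$
\Phi(\bar x,|D\varphi_1(\bar x)+\eta|)\asymp\Phi(\bar y,|D\varphi_2(\bar y)+\eta|)\ge c\,L_1^{\,s_1}
$$
whenever $L_1\gg|\eta|$, with constants depending only on $M,\overline M$; crucially the comparison is independent of $\eta$. Subtracting the two viscosity inequalities and dividing by this common value reduces the situation to
$$
c\,L_1|\phi''(|\bar x-\bar y|)|\,|\bar x-\bar y|^{2-\sigma}\le C\,L_1\phi'(|\bar x-\bar y|)\,|\bar x-\bar y|^{1-\sigma}+\frac{C\|f\|_{L^\infty(B_1)}}{L_1^{\,s_1}},
$$
which is violated once $L_1$ is taken large enough in terms of $\|f\|_{L^\infty(B_1)}$, $\|u\|_{L^\infty(B_1)}$, $\|u\|_{L^1_\sigma(\mathbb R^N)}$ and the universal constants. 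This contradicts $\sup\Psi>0$, yielding $|u(x)-u(y)|\le C_{\rm lip}|x-y|$ near $x_0$; a standard covering argument promotes this to a bound on $B_{1/2}$ with constant uniform in $\sigma$ (through $C_{N,\sigma}$) and in $\eta$.

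\textbf{Main obstacle.} The delicate point is the \emph{uniform-in-$\eta$} part of the estimate: since $\eta\in\mathbb R^N$ is arbitrary, I cannot simply treat $\Phi$ as a constant. The strategy works only if the two test gradients $D\varphi_1(\bar x)+\eta$ and $D\varphi_2(\bar y)+\eta$ remain comparable in magnitude, which is why the localization penalty $L_2|\cdot-x_0|^2$ must be kept of lower order than the coupling $L_1$ and why the ``almost monotone'' behaviour in $(A_4)$ is essential. A second delicate issue is calibrating the radius $\delta$ of the singular part of $\mathcal I^{\delta}$ to a suitable power of $|\bar x-\bar y|$ so that the ellipticity gain of order $|\bar x-\bar y|^{2-\sigma}$ genuinely beats the tangential error $\phi'(|\bar x-\bar y|)|\bar x-\bar y|^{1-\sigma}$ uniformly in $\sigma\in(1,2)$; the uniform bound as $\sigma\to 2$ rests precisely on this calibration together with the $C_{N,\sigma}$ normalization.
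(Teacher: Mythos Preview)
Your overall strategy is the same as the paper's (Ishii--Lions doubling as in \cite{BCCI12,BCI11}), but the plan has a real gap precisely at the point you flag as the ``Main obstacle''. You assert $\Phi(\bar x,|D\varphi_1(\bar x)+\eta|)\asymp\Phi(\bar y,|D\varphi_2(\bar y)+\eta|)\ge cL_1^{s_1}$ \emph{whenever $L_1\gg|\eta|$}. That qualifier is fatal: if $L_1$ is chosen in terms of $|\eta|$ then the Lipschitz constant depends on $\eta$; if instead $L_1$ is fixed by the data, nothing prevents the ``resonant'' regime $|\eta|\approx L_1$, $\eta\approx -L_1\phi'(|\bar x-\bar y|)\,e$, in which both $|D\varphi_i(\cdot)+\eta|$ can be arbitrarily small and the lower bound on $\Phi$ collapses (recall $(A_4)$ forces $\Phi(x,t)\lesssim t^{s_1}$ near $0$). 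Your proposed cure---keeping $L_2$ small so that the two test gradients stay comparable \emph{to each other}---does not help: they are comparable, but both may be tiny.

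The paper fixes this by a two--pass dichotomy rather than by balancing $L_1$ against $|\eta|$. First one fixes $L$ purely from the data and runs the argument assuming $|\eta|>a_0:=2L$; since $|D_x\varphi|\le L$, one gets $|D\varphi+\eta|\ge|\eta|-L\ge L$, hence $\Phi\ge cL^{s_1}$ and the contradiction closes. This determines a universal $a_0$. Then, for $|\eta|\le a_0$, one reruns the argument with a \emph{larger} coupling $\tilde L\ge 16a_0$; now $|D\varphi|\ge \tilde L/8\ge 2a_0\ge 2|\eta|$, so again $|D\varphi+\eta|\ge a_0$ and $\Phi\ge c\,a_0^{s_1}$. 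Since $a_0$ was fixed in the first pass, $\tilde L$ is independent of $\eta$, and $C_{\rm lip}=\max(L,\tilde L)$ is uniform in $\eta$.

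A secondary issue: your displayed closing inequality does not close with the profile $\phi(r)=r-\omega r^{3/2}$. Indeed $|\phi''|\sim r^{-1/2}$, $\phi'\sim 1$, so the ``good'' radial term is $\sim L_1 r^{3/2-\sigma}$ while your tangential term is $\sim L_1 r^{1-\sigma}$, and the latter dominates as $r\to 0$. The actual estimate in \cite{BCI11} (and quoted in the paper as \eqref{3-6}) integrates over a cone whose aperture shrinks like a small power of $|\bar x-\bar y|$; this produces the exponent $1-\sigma+\alpha(N+2-\sigma)$, which is negative for $\alpha$ small and $\sigma>1$, and combines with $|\bar x-\bar y|\lesssim L^{-1}\mathop{\rm osc}u$ to give a genuine contradiction for $L$ large. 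Your sketch should either quote that estimate directly or reproduce the cone calibration, rather than the naive radial/tangential split.
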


\begin{proof}
We divide this proof into two diverse cases in which $|\eta|$ is large or small.

\textbf{Case 1.} $|\eta|$ is large. Assume $|\eta|>a_0$ for a number $a_0>0$ to be fixed later. First, we introduce some nonnegative and smooth functions. Let $\psi:\mathbb{R}^N\rightarrow \mathbb{R}$ fulfill $\psi\equiv0$ in $B_{\frac{1}{2}}$ and $\psi\equiv1$ in $B^c_{\frac{3}{4}}$, and moreover define
$$
h(x)=\left(\mathop{\rm{osc}}\limits_{B_1}u+1\right)\psi(x).
$$
Denote
\begin{equation*}
\omega(t)=\begin{cases}
t-\frac{1}{4}t^{1+\alpha}, \quad &t\in[0,t_0],\\[2mm]
\omega(t_0), \quad &t\in(t_0,\infty),
\end{cases}
\end{equation*}
where $\alpha\in(0,1)$ is a small enough number that will be selected, and $t_0\le\left(\frac{4}{1+\alpha}\right)^\frac{1}{\alpha}$ is a fixed number.

We proceed by doubling the variables. Construct the auxiliary functions
$$
\varphi(x,y)=L\omega(|x-y|)+h(y)
$$
and
$$
\Psi(x,y)=u(x)-u(y)-\varphi(x,y).
$$
Here the constant $L\ge1$ will be determined later. In fact, it is the Lipschitz constant of viscosity solution. We can see readily, from the continuity of $\Psi$, that $\Psi$ realizes its maximum at $(\overline{x},\overline{y})\in \overline{B}_1\times\overline{B}_1$. We shall show $\Psi(\overline{x},\overline{y})\le0$ by contradiction argument so that we get the Lipschitz continuity. If not, there holds that
\begin{align*}
0&<u(\overline{x})-u(\overline{y})-L\omega(|\overline{x}-\overline{y}|)-h(\overline{y})\\
&\le \mathop{\rm{osc}}\limits_{B_1}u-\left(\mathop{\rm{osc}}\limits_{B_1}u+1\right)\psi(\overline{y})-L\omega(|\overline{x}-\overline{y}|).
\end{align*}
Obviously, $\overline{x}\neq\overline{y}$. Due to the definition of $\psi$, we have $\overline{y}\in B_\frac{3}{4}$. Otherwise, there is a contradiction that
$$
0<\mathop{\rm{osc}}\limits_{B_1}u-\mathop{\rm{osc}}\limits_{B_1}u-1-L\omega(|\overline{x}-\overline{y}|)<0.
$$
Furthermore, we also have
\begin{equation}
\label{3-1}
\mathop{\rm{osc}}\limits_{B_1}u\ge L\omega(|\overline{x}-\overline{y}|)=L|\overline{x}-\overline{y}|\left(1-\frac{1}{4}|\overline{x}-\overline{y}|^\alpha\right)\ge\frac{L}{2}|\overline{x}-\overline{y}|,
\end{equation}
which indicates that
$$
|\overline{x}|\le|\overline{x}-\overline{y}|+|\overline{y}|\le\frac{2}{L}\mathop{\rm{osc}}\limits_{B_1}u+\frac{3}{4}\le \frac{7}{8}
$$
by choosing $L\ge16\mathop{\rm{osc}}\limits_{B_1}u$.

Next, we are going to establish the viscosity inequality of $u$, which is the base for getting contradiction. Since the function $\Psi$ attains its maximum in $\overline{B}_1\times\overline{B}_1$ at $(\overline{x},\overline{y})$, then $u(x)-\varphi(x,\overline{y})$ has the local maximum at $\overline{x}$ and $u(y)-(-\varphi(\overline{x},y))$ gets the local minimum at $\overline{y}$. Through the definition of viscosity solution, we obtain
\begin{equation}
\label{3-2}
\begin{cases}
-\Phi(\overline{x},|D_x\varphi(\overline{x},\overline{y})+\eta|)\mathcal{I}^\delta(u,\varphi(\cdot,\overline{y}),\overline{x})\le f(\overline{x})\\[2mm]
-\Phi(\overline{y},|-D_y\varphi(\overline{x},\overline{y})+\eta|)\mathcal{I}^\delta(u,-\varphi(\overline{x},\cdot),\overline{y})\ge f(\overline{y})
\end{cases}
\end{equation}
for every $\delta\in(0,1)$, where
$$
D_x\varphi(\overline{x},\overline{y})=L\omega'(|\overline{x}-\overline{y}|)\frac{\overline{x}-\overline{y}}{|\overline{x}-\overline{y}|} \quad \text{and}\quad
-D_y\varphi(\overline{x},\overline{y})=L\omega'(|\overline{x}-\overline{y}|)\frac{\overline{x}-\overline{y}}{|\overline{x}-\overline{y}|}-Dh(\overline{y}).
$$
Observe that
\begin{equation}
\label{3-3}
\frac{3}{16}L\le|D_x\varphi(\overline{x},\overline{y})|=L\left|1-\frac{1+\alpha}{4}|\overline{x}-\overline{y}|^\alpha\right|\le L
\end{equation}
and
\begin{equation}
\label{3-4}
\frac{L}{8}\le\frac{3}{16}L-|Dh(\overline{y})|\le|D_y\varphi(\overline{x},\overline{y})|=L+|Dh(\overline{y})|\le\frac{17}{16}L,
\end{equation}
where we have picked $L$ so large that
$$
\left(\mathop{\rm{osc}}\limits_{B_1}u+1\right)|Dh(\overline{y})|\le C(N)\left(\mathop{\rm{osc}}\limits_{B_1}u+1\right)\le\frac{L}{16}.
$$

Recall that we have assumed $|\eta|>a_0$. Now set $a_0=2L$. Notice that, from \eqref{3-3} and \eqref{3-4},
\begin{equation}
\label{3-5}
\begin{cases}
|D_x\varphi(\overline{x},\overline{y})+\eta|\ge a_0-L\ge L   \\[2mm]
|-D_y\varphi(\overline{x},\overline{y})+\eta|\ge a_0-\frac{17}{16}L\ge \frac{15}{16}L.
\end{cases}
\end{equation}
Therefore, it follows from the inequalities \eqref{3-2} and \eqref{3-5} that
\begin{equation*}
\begin{cases}
-\mathcal{I}^\delta(u,\varphi(\cdot,\overline{y}),\overline{x})\le\frac{\|f\|_{L^\infty(B_1)}}{\Phi(\overline{x},|D_x\varphi(\overline{x},\overline{y})+\eta|)}
\le \frac{C\|f\|_{L^\infty(B_1)}}{|D_x\varphi(\overline{x},\overline{y})+\eta|^{s_1}}\le \frac{C\|f\|_{L^\infty(B_1)}}{L^{s_1}},  \\[2mm]
-\mathcal{I}^\delta(u,-\varphi(\overline{x},\cdot),\overline{y})\ge-\frac{C\|f\|_{L^\infty(B_1)}}{|-D_y\varphi(\overline{x},\overline{y})+\eta|^{s_1}}\ge -\frac{C\|f\|_{L^\infty(B_1)}}{L^{s_1}}.
\end{cases}
\end{equation*}
Here we utilized the connection that $\overline{M}^{-1}\le \Phi(x,1)\le M\frac{\Phi(x,t)}{t^{s_1}}$ for $t\ge 1$. Then it yields that
$$
\mathcal{I}^\delta(u,\varphi(\cdot,\overline{y}),\overline{x})-\mathcal{I}^\delta(u,-\varphi(\overline{x},\cdot),\overline{y})\ge-C\|f\|_{L^\infty(B_1)}
$$
with some universal constant $C\ge1$. At this moment, we want to estimate the left-hand side of the last display that consists of uniformly elliptic nonlocal operators. However, this procedure is identical to that in the proof of \cite[Lemma 3.1]{APS24} or \cite[Lemma 2.2]{PT21}, so we do not repeat the process anymore and directly give the final estimate
\begin{equation}
\label{3-6}
-C\left(\|f\|_{L^\infty(B_1)}+\mathop{\rm{osc}}\limits_{B_1}u+\|u\|_{L^1_\sigma(\mathbb{R}^N)}+1\right)\le -CL|\overline{x}-\overline{y}|^{1-\sigma+\alpha(N+2-\sigma)}.
\end{equation}
Remembering $|\overline{x}-\overline{y}|\le2L^{-1}\mathop{\rm{osc}}\limits_{B_1}u$ in \eqref{3-1} and $L>2\mathop{\rm{osc}}\limits_{B_1}u$, together with taking $\alpha>0$ sufficiently small to derive
$$
\overline{\alpha}:=1-\sigma+\alpha(N+2-\sigma)\le \frac{1-\sigma}{2}<0,
$$
it then follows from \eqref{3-6} that
$$
-C\left(\|f\|_{L^\infty(B_1)}+\mathop{\rm{osc}}\limits_{B_1}u+\|u\|_{L^1_\sigma(\mathbb{R}^N)}+1\right)\le-L,
$$
where $C>0$ is a universal constant. Apparently, we get a contradiction via choosing $L=2C\left(\|f\|_{L^\infty(B_1)}+\|u\|_{L^\infty(B_1)}+\|u\|_{L^1_\sigma(\mathbb{R}^N)}+1\right)$, which implies the assumption $\Psi(\overline{x},\overline{y})>0$ is false. We now complete the proof in the case of $|\eta|$ large.

\textbf{Case 2.} $|\eta|$ is small. Let us study the case $|\eta|\le a_0$ with $a_0$ being a fixed constant from Case 1. The procedure is very similar to Case 1, so we sketch it. Introduce a function
$$
\varphi(x,y)=\tilde{L}\omega(|x-y|)+h(y)
$$
and follow these notations in Case 1 except the number $L$ replacing by $\tilde{L}$. Then the inequalities \eqref{3-3} and \eqref{3-4} become
$$
\frac{\tilde{L}}{8}\le |D_x\varphi(\overline{x},\overline{y})|,|D_y\varphi(\overline{x},\overline{y})|\le \frac{17}{16}\tilde{L}.
$$
Since $|\eta|\le a_0$, we have
\begin{equation*}
\begin{cases}
|D_x\varphi(\overline{x},\overline{y})+\eta|\ge|D_x\varphi(\overline{x},\overline{y})|-|\eta|\ge\frac{\tilde{L}}{8}-a_0\ge a_0\\[2mm]
|-D_y\varphi(\overline{x},\overline{y})+\eta|\ge\frac{17\tilde{L}}{16}-a_0\ge 16a_0
\end{cases}
\end{equation*}
by picking $\tilde{L}\ge16a_0$. Moreover, we also arrive at
$$
\mathcal{I}^\delta(u,\varphi(\cdot,\overline{y}),\overline{x})-\mathcal{I}^\delta(u,-\varphi(\overline{x},\cdot),\overline{y})\ge-\frac{C\|f\|_{L^\infty(B_1)}}{a^{s_1}_0}.
$$
At this stage, following the same proof in Case 1 deduces the Lipschitz continuity of $u$ by determining $\tilde{L}$ large enough.
\end{proof}

\begin{lemma}
\label{lem3-2}
Let the conditions $(A_1)$--$(A_5)$ hold true with $\overline{M}=1$. Suppose that $u\in C(\overline{B}_1)$ is a viscosity solution of \eqref{2-1} with $\|u\|_{L^\infty(B_1)}\le1$. Given $\mu,\varepsilon>0$, there is a positive $\kappa$ depending on $N,\lambda,\Lambda,\varepsilon,s_1,s_2,M,\mu$ 
such that if
$$
|\sigma-2|+\|f\|_{L^\infty(B_1)}\leq \kappa
$$
and
$$
|u(x)|\le\mu(1+|x|^{1+\overline{\alpha}}) \quad  \text{for }  x\in\mathbb{R}^N
$$
with some $\overline{\alpha}\in(0,1)$, then there exists an $F$-harmonic function $h\in C^{1,\overline{\alpha}}_{\rm loc}(B_1)$ (i.e., $h$ solves $F(D^2h)=0$ in the viscosity sence) fulfilling
$$
\|u-h\|_{L^\infty(B_{1/2})}\leq \varepsilon
$$
\end{lemma}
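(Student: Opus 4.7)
The proof proceeds by contradiction and compactness. Suppose the statement fails for some $\mu,\varepsilon_0>0$: then there exist $\sigma_k\to 2$, $f_k$ with $\|f_k\|_{L^\infty(B_1)}\to 0$, vectors $\eta_k\in\mathbb{R}^N$, and viscosity solutions $u_k\in C(\overline{B}_1)$ of the corresponding version of \eqref{2-1} satisfying $\|u_k\|_{L^\infty(B_1)}\le 1$ and $|u_k(x)|\le\mu(1+|x|^{1+\overline{\alpha}})$, but $\|u_k-h\|_{L^\infty(B_{1/2})}>\varepsilon_0$ for every $F$-harmonic $h$. By Proposition \ref{pro3-1}, $\{u_k\}$ is uniformly locally Lipschitz in $B_1$ with a constant that is independent of $\eta_k$ and stable as $\sigma_k\to 2$. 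Combined with the $L^\infty$ bound and the polynomial growth at infinity, Arzel\`a--Ascoli produces a subsequence $u_k\to u_\infty$ locally uniformly in $B_1$, with $u_\infty$ inheriting the same growth at infinity.

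The next step is to identify the limiting equation. Assumption $(A_5)$ says each $K_{ij}$ coincides with $k_{ij}C_{N,\sigma}|x|^{-N-\sigma}$ near the origin up to a modulus of continuity; a second-order Taylor expansion then shows that, for a fixed smooth $\varphi$, the integral $I_{K_{ij}}[B_\delta](\varphi,\cdot)$ converges, after sending $\sigma_k\to 2$ and $\delta\to 0$, to $\mathrm{tr}\bigl(A_{ij}D^2\varphi(\cdot)\bigr)$ for matrices $A_{ij}$ determined by $k_{ij}$. Hence $\mathcal{I}_{\sigma_k}$ converges to the uniformly elliptic local operator
\[
F(D^2 u)=\inf_j\sup_i \mathrm{tr}\bigl(A_{ij}D^2 u\bigr).
\]
I then check that $u_\infty$ solves $F(D^2 u_\infty)=0$ in the viscosity sense. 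Given a smooth $\varphi$ touching $u_\infty$ strictly from above at $x_0$, a standard perturbation argument produces touching points $x_k\to x_0$ where
\[
-\Phi\bigl(x_k,|D\varphi(x_k)+\eta_k|\bigr)\,\mathcal{I}_{\sigma_k}^{\delta}(u_k,\varphi,x_k)\le f_k(x_k);
\]
if $D\varphi(x_0)+\eta\neq 0$, then $(A_3)$--$(A_4)$ (together with $\overline{M}=1$) force $\Phi(x_k,|D\varphi(x_k)+\eta_k|)$ to be bounded below by a positive constant, and dividing and passing to the limit yields $F(D^2\varphi(x_0))\le 0$.

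In the degenerate case $D\varphi(x_0)+\eta=0$ one cannot divide, so I replace $\varphi$ with $\varphi+\varepsilon\,\xi\cdot(x-x_0)$ for a unit vector $\xi$; the new test function has nonvanishing gradient at $x_0$, the previous argument applies, and letting $\varepsilon\to 0$ preserves $F(D^2\varphi(x_0))\le 0$ because $F$ depends only on $D^2\varphi$. The supersolution inequality is symmetric, so $u_\infty$ is $F$-harmonic and therefore lies in $C^{1,\overline{\alpha}}_{\rm loc}(B_1)$; choosing $h=u_\infty$ contradicts $\|u_k-h\|_{L^\infty(B_{1/2})}>\varepsilon_0$. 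The chief obstacle in this program is the operator convergence $\mathcal{I}_{\sigma_k}\to F$: one must simultaneously control the principal part $I_{K_{ij}}[B_\delta]$ via $(A_5)$ and show that the far-field tail $I_{K_{ij}}[B_\delta^c](u_k,\cdot)$ vanishes uniformly as $\sigma_k\to 2$, despite $u_k$ being controlled only through a polynomial growth estimate; the degeneration of the normalization $C_{N,\sigma}$ as $\sigma\to 2$ is essential for this tail estimate, and the uniform Lipschitz bound from Proposition \ref{pro3-1} is what keeps the whole argument stable up to the endpoint $\sigma=2$.
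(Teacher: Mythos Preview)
Your overall scheme---argue by contradiction, extract a locally uniformly convergent subsequence via the $\eta$-independent Lipschitz bound of Proposition~\ref{pro3-1}, and show that the limit $u_\infty$ solves $F(D^2u_\infty)=0$ by using $(A_5)$ to pass from $\mathcal{I}_{\sigma_k}$ to a local uniformly elliptic $F$---is exactly the paper's strategy. The genuine gap is your treatment of the degenerate case $D\varphi(x_0)+\eta=0$. Adding the linear term $\varepsilon\,\xi\cdot(x-x_0)$ does \emph{not} give a test function that still touches $u_\infty$ at $x_0$; the contact point slides to some $\tilde x_\varepsilon\neq x_0$, and at that point the relevant quantity $D\tilde\varphi(\tilde x_\varepsilon)+\eta=A\tilde x_\varepsilon+\varepsilon\xi$ may vanish again. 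For a concrete obstruction take $u_\infty\equiv 0$, $\eta=0$, $\varphi(x)=\tfrac12 Ax\cdot x$ with $A$ invertible: then $u_\infty-\tilde\varphi$ is extremized at $\tilde x_\varepsilon=-\varepsilon A^{-1}\xi$ and $A\tilde x_\varepsilon+\varepsilon\xi=0$ for \emph{every} $\xi$, so the division by $\Phi$ is still blocked along the approximating sequence and ``the previous argument'' does not apply.

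The paper (following Imbert--Silvestre) handles this by an inner contradiction: assume the desired inequality fails, say $F(A)>0$; then $A$ has a positive eigenvalue, and one lets $T$ be the associated invariant subspace and perturbs by the convex, nonsmooth term $\gamma|P_T(x)|$. Where $P_T(x)\neq 0$ the extra gradient is $\gamma\hat e$ with $\hat e=P_T(x)/|P_T(x)|\in T$, and since $A$ is positive definite on $T$ one has $(Ax+\gamma\hat e)\cdot\hat e=|P_T(x)|\,A\hat e\cdot\hat e+\gamma>0$, so the total gradient cannot cancel and the viscosity inequality for $u_k$ can be divided through. Passing to the limit yields $F\bigl(A+\gamma D^2|P_T|\bigr)\le 0$, and $D^2|P_T|\ge 0$ with ellipticity forces $F(A)\le 0$, the contradiction. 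The borderline case $P_T(x_k^\gamma)=0$ is treated separately using the linear supports $\gamma\,e\cdot P_T(x)$ with a carefully chosen unit vector $e$. This eigenspace-projection device is the missing ingredient in your argument; a bare linear tilt is not enough.
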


\begin{proof}
Argue by contradiction. If not, then there are $\mu_0,\varepsilon_0>0$ and sequences of $\{\sigma_k\},\{f_k\}$, $\{\Phi_k\},\{u_k\}$ and a sequence of vectors $\{\eta_k\}$ such that
\begin{itemize}
  \item[(i)]
$$
|\sigma_k-2|+\|f_k\|_{L^\infty(B_1)}\leq \frac{1}{k};
$$

    \smallskip

  \item[(ii)] $u_k\in C(B_1)$ with $\|u_k\|_{L^\infty(B_1)}\leq 1$ and $|u_k(x)|\le\mu_0(1+|x|^{1+\overline{\alpha}})$ solves the following equation
      \begin{equation}
      \label{3-2-1}
      -\Phi_k(x,|Du_k+\eta_k|)\mathcal{I}_{\sigma_k}(u_k,x)=f_k(x)  \quad \text{in } B_1,
      \end{equation}
      where the operator $\mathcal{I}_{\sigma_k}$ satisfies $(A_5)$.

    \smallskip

  \item[(iii)] Here $\Phi_k\in C(B_1\times[0,\infty),[0,\infty))$ satisfies that the function $t\rightarrow\frac{\Phi_k(x,t)}{t^{s_1}}$ is almost increasing and the function $t\rightarrow\frac{\Phi_k(x,t)}{t^{s_2}}$ is almost decreasing with the same constant $M\ge1$ and $\Phi_k(x,1)=1$ for all $x\in B_1$.

\end{itemize}
Nonetheless, we have
$$
\|u_k-h\|_{L^\infty(B_{1/2})}>\varepsilon_0
$$
for any $h(x)\in C^{1,\overline{\alpha}}_{\rm loc}(B_1)$. Recalling $\sigma_k\rightarrow2$, by means of the condition ($A_5$), we know that $\mathcal{I}_{\sigma_k}\rightarrow F$, where $F$ is a uniformly $(\lambda,\Lambda)$-elliptic operator. Moreover, it follows from Proposition \ref{pro3-1} that there is a continuous function $\overline{u}$ such that $u_k\rightarrow \overline{u}$ locally uniformly in $B_1$. Particularly, it holds that
$$
\overline{u}\in C(B_{3/4}) \quad \text{and} \quad \|\overline{u}\|_{L^\infty(B_{3/4})}\leq1,
$$
but
\begin{equation}
\label{3-2-2}
\sup_{x\in B_{1/2}}|\overline{u}(x)-h(x)|>\varepsilon_0.
\end{equation}

In what follows, we are going to demonstrate that $\overline{u}$ solves the equation below in the viscosity sense
\begin{equation}
\label{3-2-3}
-F(D^2\overline{u})=0   \quad \text{in }  B_{3/4}.
\end{equation}
To this end, we only prove $\overline{u}$ is a viscosity supersolution because the result that $\overline{u}$ is a subsolution can be shown similarly. Suppose that $\varphi(x)$ is a test function such that $\overline{u}-\varphi$ realizes its local minimum in $B_1$ at $\tilde{x}$. Without loss of generality, assume $|\tilde{x}|=\overline{u}(0)=\varphi(0)=0$ and $\varphi$ is a quadratic polynomial, that is,
$$
\varphi(x)=\frac{1}{2}Ax\cdot x+p\cdot x,
$$
where $p\in\mathbb{R}^N$ and $A$ is a symmetric matrix and $Ax\cdot x$ or $p\cdot x$ denotes the inner product. Thanks to $u_k\rightarrow\overline{u}$ locally uniformly in $B_1$, we could find a point $x_k$ and a quadratic polynomial
$$
\varphi_k(x):=\frac{1}{2}A(x-x_k)\cdot(x-x_k)+p\cdot(x-x_k)+u_k(x_k)
$$
touching $u_k$ from below at $x_k$ lying in a small neighborhood of the origin. In view of $u_k$ a viscosity solution to \eqref{3-2-1}, it yields that
\begin{equation}
\label{3-2-4}
 -\Phi_k(x_k,|p+\eta_k|)\mathcal{I}^\delta_{\sigma_k}(u_k,\varphi_k,x_k)\ge f_k(x_k).
\end{equation}

Next, we distinguish two different scenarios. First of all, if $\{\eta_k\}$ is unbounded, then, with the aid of $\Phi_k(x_k,|p+\eta_k|)\ge C|p+\eta_k|^{s_1}\rightarrow\infty$ up to a subsequence, we pass to the limit in \eqref{3-2-4} and get
$$
-F(A)=-\lim_{k\rightarrow\infty}\mathcal{I}^\delta_{\sigma_k}(u_k,\varphi_k,x_k)\ge \lim_{k\rightarrow\infty}\frac{f_k(x_k)}{\Phi_k(x_k,|p+\eta_k|)}=0.
$$
For the second case that $\{\eta_k\}$ is bounded, we may suppose $\eta_k\rightarrow\overline{\eta}$ (up to a subsequence). We first examine that if $|p+\overline{\eta}|\neq0$, then $|p+\eta_k|\ge\frac{1}{2}|p+\overline{\eta}|$ for $k$ large enough. Thereby, applying the assumptions on $\Phi_k$, (for large $k$) we have
\begin{equation*}
\begin{cases}
\Phi_k(x_k,|p+\eta_k|)\ge M^{-1}|p+\eta_k|^{s_1}\ge2^{-s_1}M^{-1}|p+\overline{\eta}|^{s_1}, \ \text{for } |p+\overline{\eta}|\ge1 \\[2mm]
\Phi_k(x_k,|p+\eta_k|)\ge M^{-1}|p+\eta_k|^{s_2}\ge2^{-s_2}M^{-1}|p+\overline{\eta}|^{s_2}, \ \text{for } |p+\overline{\eta}|<1
\end{cases}
\end{equation*}
and further get
$$
-F(A)=-\lim_{k\rightarrow\infty}\mathcal{I}^\delta_{\sigma_k}(u_k,\varphi_k,x_k)\ge -\lim_{k\rightarrow\infty}\frac{2^{s_2}M}{k\min\{|p+\overline{\eta}|^{s_1},|p+\overline{\eta}|^{s_2}\}}=0.
$$

Now let us concentrate on justifying $F(A)\le0$ under the situation $|p+\overline{\eta}|=0$ that involves $p=\overline{\eta}=0$ or $p=-\overline{\eta}\neq0$. Suppose by contradiction
\begin{equation}
\label{3-2-4-1}
F(A)>0.
\end{equation}
Thus we can find from the uniform ellipticity of $F(\cdot)$ that the matrix $A$ has one positive eigenvalue at least. Let $\mathbb{R}^n=T\oplus Q$ be the orthogonal sum, where $T$ 
stands for the invariant space made out of the eigenvectors related to positive eigenvalues.

\medskip

\textbf{Case 1.} $b=-\overline{\eta}\neq0$. Let $\gamma>0$ and
$$
\phi_\gamma(x):=\varphi(x)+\gamma|P_T(x)|=\frac{1}{2}Ax\cdot x+p\cdot x+\gamma|P_T(x)|
$$
with $P_T$ meaning the orthogonal projection over $T$. Owing to $u_k\rightarrow \overline{u}$ locally uniformly and $\varphi(x)$ touching $\overline{u}(x)$ from below at 0, then for $\gamma$ sufficiently small $\phi_\gamma(x)$ touches $u_k(x)$ from below at some point $x^\gamma_k$ in a neighbourhood of 0. Besides, $x^\gamma_k\rightarrow\overline{x}$ for some $\overline{x}\in B_{3/4}$ as $k\rightarrow\infty$ (up to a subsequence).

When $P_T(x^\gamma_k)=0$, we announce $F(A)\leq 0$, which contradicts \eqref{3-2-4-1}. Notice that
$$
\phi_\gamma(x):=\frac{1}{2}Ax\cdot x+p\cdot x+\gamma e\cdot P_T(x)
$$
touches $u_k$ from below at $x^\gamma_k$ for all $e\in \mathbb{S}^{N-1}$ (i.e., $|e|=1$). Then we have the viscosity inequality as
\begin{equation}
\label{3-2-5}
-\Phi_k(x^\gamma_k,|\eta_k+Ax^\gamma_k+p+\gamma P_T(e)|)\mathcal{I}^\delta_{\sigma_k}(u_k,\phi_\gamma,x^\gamma_k)\ge f_k(x^\gamma_k),
\end{equation}
where we note $D(e\cdot P_T(x))=P_T(e)$. Now choosing $e\in T\cap \mathbb{S}^{N-1}$ yields $P_T(e)=e$. If $A\overline{x}=0$, then it follows from $|\eta_k+p|\rightarrow0$ that, for large $k$,
\begin{equation*}
|\eta_k+p+Ax^\gamma_k+\gamma e|>\frac{\gamma}{2}.
\end{equation*}
Therefore, via the structure on $\Phi_k$, we get
$$
\lim_{k\rightarrow\infty}\frac{|f_k(x^\gamma_k)|}{\Phi_k(x^\gamma_k,|\eta_k+Ax^\gamma_k+p+\gamma P_T(e)|)}\le\lim_{k\rightarrow\infty}\frac{C}{k\min\{\gamma^{s_1},\gamma^{s_2}\}}=0,
$$
and further have $F(A)\le0$ by \eqref{3-2-5}. If $A\overline{x}\neq0$, then in the case $T\equiv \mathbb{R}^N$ we pick carefully $e\in \mathbb{S}^{N-1}$ such that
$$
|\eta_k+p+Ax^\gamma_k+\gamma e|\ge\frac{1}{2}|A\overline{x}+\gamma e|-\frac{1}{4}|A\overline{x}+\gamma e|=\frac{1}{4}|A\overline{x}+\gamma e|>0.
$$
Hence, we can obtain $F(A)\le0$. On the other hand, if $A\overline{x}\neq0$ and $T\neq\mathbb{R}^N$, we take $e\in T^\bot\cap \mathbb{S}^{N-1}$ so that
$$
|\eta_k+p+Ax^\gamma_k+\gamma P_T(e)|\ge\frac{1}{2}|A\overline{x}|-\frac{1}{4}|A\overline{x}|=\frac{1}{4}|A\overline{x}|>0,
$$
where we observe $P_T(e)=0$. Thus, analogous to the case $A\overline{x}=0$ again, we can see $F(A)\le0$.

Finally, let us treat the occurrence $P_T(x^\gamma_k)\neq0$. We can find that the map $x\rightarrow|P_T(x)|$ is convex and smooth near the point $x^\gamma_k$. Because of $P_T$ being a projection, then
\begin{equation}
\label{3-2-6}
|P_T(x)|D(|P_T(x)|)=P_T(x) \ \text{ and } \ D^2(|P_T(x)|) \ \text{is nonnegative definite}.
\end{equation}
Hence we arrive at the following viscosity inequality
\begin{align*}
-\Phi_k(x^\gamma_k,|\eta_k+Ax^\gamma_k+p+\gamma \hat{e}|)\mathcal{I}^\delta_{\sigma_k}(u_k,\phi_\gamma,x^\gamma_k)\ge f_k(x^\gamma_k),
\end{align*}
where we let $\hat{e}=\frac{P_T(x^\gamma_k)}{|P_T(x^\gamma_k)|}$ for simplicity. Considering separately the scenarios that $A\overline{x}=0$ and $A\overline{x}\neq0$ like the case $P_T(x^\gamma_k)=0$ leads to
$$
-F(A+D^2|P_T(\overline{x})|)\ge0.
$$
By virtue of \eqref{3-2-6} and the ellipticity condition on $F$, we derive $F(A)\le0$, which contradicts \eqref{3-2-4-1}.

\medskip

\textbf{Case 2.} $p=\eta=0$. At this point, the procedures become easier. Since $\frac{1}{2}Ax\cdot x$ touches $\overline{u}(x)$ from below at 0 and $u_k\rightarrow\overline{u}$ locally uniformly, then the test function
$$
\hat{\phi}_\gamma(x):=\frac{1}{2}Ax\cdot x+\gamma|P_T(x)|
$$
touches $u_k$ from below at some $\hat{x}_k$ in a small neighborhood of 0 for $k$ large enough. Likewise, we will check two cases that $|P_T(\hat{x}_k)|=0$ and $|P_T(\hat{x}_k)|>0$, and further estimate the boundedness on $|\eta_k+A\hat{x}_k+e|$ as well as $|\eta_k+A\hat{x}_k+\gamma \hat{e}|$ with $\hat{e}:=\frac{P_T(\hat{x}_k)}{|P_T(\hat{x}_k)|}$ $(P_T(\hat{x}_k)\neq0)$, which is in a similar manner to Case 1. Eventually, we conclude $F(A)\leq0$, which contradicts \eqref{3-2-4}.

At this stage, we have proved $\overline{u}$ is a viscosity supersolution to \eqref{3-2-3} and in a specular way we could verify it is a subsolution as well. It is well known in \cite[Chapter 5]{CC95} that the solution, $\overline{u}$, of \eqref{3-2-3} is of $C^{1,\overline{\alpha}}$ locally for some $\overline{\alpha}\in (0,1)$, so we choose $h=\overline{u}$ and obtain a contradiction with \eqref{3-2-2}. Now the proof is finished.
\end{proof}

\begin{lemma}
\label{lem3-3}
Let the preconditions $(A_1)$--$(A_5)$ be fulfilled. Suppose $u$ is a normalized viscosity solution of \eqref{main}. Given $\mu>0$, there is a $\kappa>0$ depending on $N,\lambda,\Lambda,s_1,s_2,M,\overline{M},\alpha$ such that if
$$
|\sigma-2|+\|f\|_{L^\infty(B_1)}\leq \kappa
$$
and
$$
|u(x)|\le\mu(1+|x|^{1+\overline{\alpha}}) \quad \text{for } x\in\mathbb{R}^N
$$
with $\overline{\alpha}$ coming from Lemma \ref{lem3-2}, then we can find a $\rho\in\left(0,\frac{1}{2}\right)$, depending on $N,\lambda,\Lambda,\alpha$, and a sequence of affine functions $\{l_n\}$ of the type $l_n(x)=a_n+p_n\cdot x$ for which
\begin{equation}
\label{3-3-0}
\|u-l_n\|_{L^\infty(B_{\rho^n})}\leq \rho^{n(1+\alpha)}
\end{equation}
with
$$
|a_{n+1}-a_n|+\rho^n|p_{n+1}-p_n|\leq C\rho^{n(1+\alpha)}
$$
for every
$$
\alpha\in(0,\overline{\alpha})\cap \left(0,\frac{\sigma-1}{1+s_2}\right].
$$
Here $C\ge1$ only depends on $N,\lambda,\Lambda$.
\end{lemma}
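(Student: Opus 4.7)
The plan is a dyadic iteration: at each scale $\rho^n$, I rescale the function, apply the approximation Lemma~\ref{lem3-2}, and extract the next affine correction from the first-order Taylor polynomial of the $F$-harmonic limit at the origin. Before starting, I fix parameters using the classical interior $C^{1,\bar\alpha}$ estimate for $F$-harmonic functions: if $F(D^2 h)=0$ in $B_{1/2}$ with $\|h\|_{L^\infty(B_{1/2})}\le 2$, then $|h(x)-h(0)-Dh(0)\cdot x|\le C_0|x|^{1+\bar\alpha}$ and $|h(0)|+|Dh(0)|\le C_0$ for $|x|\le 1/2$, with $C_0$ universal. Given $\alpha\in(0,\bar\alpha)\cap(0,\tfrac{\sigma-1}{1+s_2}]$, I first choose $\rho\in(0,1/2)$ so small that $C_0\rho^{\bar\alpha-\alpha}\le\tfrac12$, then set $\varepsilon:=\tfrac12\rho^{1+\alpha}$, and finally pick $\kappa$ so small that both $|\sigma-2|\le\kappa$ and $M\overline{M}\kappa\le\kappa^\star(\mu_0,\varepsilon)$, where $\kappa^\star$ is the constant furnished by Lemma~\ref{lem3-2} and $\mu_0$ is an $n$-independent growth constant to be determined below.

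I set $l_0\equiv 0$ and run the induction: assuming $l_0,\ldots,l_n$ are already built, the rescaled function $v_n(x):=\rho^{-n(1+\alpha)}(u(\rho^n x)-l_n(\rho^n x))$ satisfies $\|v_n\|_{L^\infty(B_1)}\le 1$. Because $\mathcal{I}_\sigma$ annihilates affine functions (its kernels are symmetric) and the nonlocal kernels scale by a factor $\rho^{n\sigma}$ while remaining in $\mathcal{K}$, a direct change of variables produces an equation of the form
\begin{equation*}
-\Phi_n(x,|Dv_n+\eta_n|)\widetilde{\mathcal{I}}_\sigma(v_n,x)=f_n(x)\quad\text{in }B_1,
\end{equation*}
with $\eta_n:=\rho^{-n\alpha}p_n$, $\Phi_n(x,t):=\Phi(\rho^n x,\rho^{n\alpha}t)/\Phi(\rho^n x,\rho^{n\alpha})$, $f_n(x):=\rho^{n(\sigma-1-\alpha)}f(\rho^n x)/\Phi(\rho^n x,\rho^{n\alpha})$, and $\widetilde{\mathcal{I}}_\sigma$ a uniformly elliptic nonlocal operator in the same class $\mathcal{K}$. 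Using $(A_3)$--$(A_4)$ one verifies that $\Phi_n$ again satisfies these structure conditions with the same $s_1,s_2,M$ and with $\Phi_n(x,1)=1$, and that $\Phi(\rho^n x,\rho^{n\alpha})\ge(M\overline{M})^{-1}\rho^{n\alpha s_2}$, whence $\|f_n\|_{L^\infty(B_1)}\le M\overline{M}\rho^{n(\sigma-1-\alpha(1+s_2))}\|f\|_{L^\infty(B_1)}\le M\overline{M}\|f\|_\infty\le\kappa^\star$, precisely because $\alpha\le\tfrac{\sigma-1}{1+s_2}$ makes the exponent nonnegative.

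The nonlocal tail hypothesis $|v_n(x)|\le\mu_0(1+|x|^{1+\bar\alpha})$ must also be propagated. The step increments sum as geometric series to give $|a_n|,|p_n|\le C$ uniformly in $n$. For $|y|\le 1$, splitting into dyadic annuli $\rho^{k+1}<|y|\le\rho^k$ and telescoping $l_k-l_n$ yields $|u(y)-l_n(y)|\le C|y|^{1+\alpha}$ when $|y|>\rho^n$, and the trivial bound $\rho^{n(1+\alpha)}$ when $|y|\le\rho^n$. For $|y|>1$ the growth hypothesis on $u$ together with the affine bound on $l_n$ give $|u(y)-l_n(y)|\le C(1+|y|^{1+\bar\alpha})$. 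Rescaling these three regimes and using $\bar\alpha>\alpha$ so that $\rho^{n(\bar\alpha-\alpha)}\le 1$, I obtain $|v_n(x)|\le\mu_0(1+|x|^{1+\bar\alpha})$ with $\mu_0$ independent of $n$.

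Now Lemma~\ref{lem3-2} produces an $F$-harmonic $h$ with $\|v_n-h\|_{L^\infty(B_{1/2})}\le\varepsilon$. Setting $a_{n+1}:=a_n+\rho^{n(1+\alpha)}h(0)$ and $p_{n+1}:=p_n+\rho^{n\alpha}Dh(0)$, combining the approximation error with the Taylor expansion of $h$ gives, for $|x|\le\rho$,
\begin{equation*}
|v_n(x)-h(0)-Dh(0)\cdot x|\le\varepsilon+C_0\rho^{1+\bar\alpha}\le\rho^{1+\alpha},
\end{equation*}
which after undoing the rescaling is exactly \eqref{3-3-0} at level $n+1$, while the increment estimate follows from $|h(0)|+|Dh(0)|\le C_0$. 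The main obstacle I expect is the tail propagation in the previous paragraph: the naive estimate $\rho^{-n(1+\alpha)}\mu(\rho^n|x|)^{1+\bar\alpha}$ is only borderline bounded by $\mu|x|^{1+\bar\alpha}$, and relies crucially on the strict inequality $\alpha<\bar\alpha$; the complementary constraint $\alpha\le\tfrac{\sigma-1}{1+s_2}$ enters instead through the scaling of the source $f_n$, and both sharp constraints appear visibly in the conclusion of the lemma.
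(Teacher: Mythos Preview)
Your proof is correct and follows essentially the same approach as the paper: fix $\rho$ and $\varepsilon$ from the $C^{1,\bar\alpha}$ estimate for $F$-harmonic functions, then iterate Lemma~\ref{lem3-2} on the rescaled functions $v_n$, verifying at each step the structure of the equation, the smallness of the source term (via $\alpha\le\tfrac{\sigma-1}{1+s_2}$), and the growth bound at infinity. The only minor variation is that you propagate the tail bound by telescoping $u-l_n$ over dyadic annuli, whereas the paper instead uses the recursion $u_{n+1}(x)=\rho^{-(1+\alpha)}(u_n(\rho x)-\tilde l(\rho x))$ and splits into the cases $\rho|x|>\tfrac12$ and $\rho|x|\le\tfrac12$; both routes are standard and yield the same conclusion.
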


\begin{proof}
\textbf{Step 1}. First, we assert that there is an affine function $l(x)$ and a $\rho\in\Big(0,\frac{1}{2}\Big)$ such that
 $$
 \sup_{x\in B_\rho}|u-l|\le\rho^{1+\alpha}.
 $$
Assume with no loss of generality $\Phi(x,1)=1$. Otherwise, set $\overline{\Phi}(x,t)=\frac{\Phi(x,t)}{\Phi(x,1)}$. Let $h\in C^{1,\overline{\alpha}}(B_{3/4})$ is an $F$-harmonic function from Lemma \ref{lem3-2} such that
$$
\|u-h\|_{L^\infty(B_{3/4})}\le\varepsilon
$$
with $\varepsilon>0$ to be fixed a posteriori. The existence of such function $h(x)$ is assured, provided $\kappa>0$ is small enough from Lemma \ref{lem3-2}.

It follows from the regularity theory for $h$ in \cite{CC95} that for $\rho\in(0,1)$
$$
\sup_{x\in B_\rho}|h(x)-(h(0)+Dh(0)\cdot x)|\leq C\rho^{1+\overline{\alpha}},
$$
and
$$
|h(0)|+|Dh(0)|\leq C,
$$
where both $C$ and $\overline{\alpha}\in(0,1)$ depend only on $N,\lambda,\Lambda$. Denote
$$
l(x)=a+p\cdot x:=h(0)+Dh(0)\cdot x.
$$
Then
\begin{align*}
\sup_{x\in B_\rho}|u(x)-l(x)|\leq \sup_{x\in B_\rho}|u(x)-h(x)|+\sup_{x\in B_\rho}|h(x)-l(x)|<\varepsilon+C\rho^{1+\overline{\alpha}}.
\end{align*}
Since $0<\alpha<\overline{\alpha}$, we take $0<\rho\ll1$ satisfying
$$
C\rho^{\overline{\alpha}-\alpha}\leq\frac{1}{2} \quad \Rightarrow \rho\leq(2C)^{-\frac{1}{\overline{\alpha}-\alpha}}.
$$
In addition, fix $\varepsilon=\frac{1}{2}\rho^{1+\alpha}$. 
This leads to the claim.

\medskip

\textbf{Step 2}. We proceed by induction. For $k=1$, this is the content in Step 1. Suppose the conclusion in Lemma \ref{lem3-3} holds for $k=1,2,\cdots,n$. Now we shall justify that for $k=n+1$. Introduce a function $u_n(x):\mathbb{R}^N\rightarrow\mathbb{R}$ as
$$
u_n(x):=\frac{u(\rho^n x)-l_n(\rho^n x)}{\rho^{n(1+\alpha)}}.
$$
We can check that $u_n$ solves in the viscosity sense an equation of the form
$$
-\overline{\Phi}(x,|Du_n+\rho^{-n\alpha}p_n|)\overline{\mathcal{I}}(u_n,x)=\overline{f}(x),
$$
where the nonlocal operator $\overline{\mathcal{I}}$ has the same uniform ellipticity condition as the operator $\mathcal{I}$ in \eqref{main} (more details on this can be seen in \cite{CS11}), and
$$
\overline{\Phi}(x,t)=\frac{\Phi(\rho^nx,\rho^{n\alpha}t)}{\Phi(\rho^nx,\rho^{n\alpha})} \quad\text{and}\quad \overline{f}(x)=\frac{\rho^{n\sigma-n(1+\alpha)}}{\Phi(\rho^nx,\rho^{n\alpha})}f(\rho^nx).
$$
Observe that $\overline{\Phi}(x,1)=1$ and the function $t\rightarrow\frac{\overline{\Phi}(x,t)}{t^{s_1}}$ is almost non-decreasing and the function $t\rightarrow\frac{\overline{\Phi}(x,t)}{t^{s_2}}$ is almost non-increasing with the identical constant $M\ge1$ in $(A_4)$. Recall $\alpha\le \frac{\sigma-1}{1+s_2}$. Then we have $0\le \sigma-1-\alpha(1+s_2)$ and further
$$
\frac{\rho^{n(\sigma-1-\alpha)}}{\Phi(\rho^nx,\rho^{n\alpha})}\le\frac{\rho^{n(\sigma-1-\alpha)}}{M^{-1}\rho^{ns_2\alpha}}\le M,
$$
which indicates $\|\overline{f}\|_{L^\infty(B_1)}\le M\varepsilon$. Once we justify the assertion
\begin{equation}
\label{3-3-1}
|u_n(x)|\le1+|x|^{1+\overline{\alpha}} \quad\text{for } x\in\mathbb{R}^N,
\end{equation}
we could apply the conclusion in Step 1 to arrive at
$$
\sup_{B_\rho}|u_n(x)-\tilde{l}(x)|\le \rho^{1+\alpha},
$$
where $\tilde{l}(x)$ is an affine function of the form $\tilde{l}(x)=\tilde{a}+\tilde{p}\cdot x$ with $|\tilde{a}|+|\tilde{p}|\le C(N,\lambda,\Lambda)$. Scaling back, it holds that
$$
\sup_{B_{\rho^{n+1}}}|u(x)-l_{n+1}(x)|\le \rho^{(n+1)(1+\alpha)}
$$
with
$$
l_{n+1}:=a_{n+1}+p_{n+1}\cdot x=l_n(x)+\rho^{n(1+\alpha)}\tilde{l}(\rho^{-n}x).
$$
Here we deduce
$$
|a_{n+1}-a_n|\le C\rho^{n(1+\alpha)} \quad \text{and}\quad |p_{n+1}-p_n|\le C\rho^{n\alpha},
$$
as desired.

We eventually verify \eqref{3-3-1} by induction to complete the proof. For $k=0$, take $u_0=u$. Assume the inequality \eqref{3-3-1} holds for $k=0,1,2,\cdots,n$. Next, we prove it for $k=n+1$. Observe that
\begin{align*}
u_{n+1}(x)&=\rho^{-(1+\alpha)}\left[\frac{u(\rho^n(\rho x))-l_n(\rho^n(\rho x))}{\rho^{n(1+\alpha)}}-\tilde{l}(\rho x)\right]\\
&=\frac{u_n(\rho x)-\tilde{l}(\rho x)}{\rho^{1+\alpha}}.
\end{align*}
When $\rho|x|>\frac{1}{2}$, we continue to take $\rho\le \Big(\frac{1}{10(1+C)}\Big)^\frac{1}{\overline{\alpha}-\alpha}$ to
\begin{align*}
|u_{n+1}(x)|&\le \rho^{-(1+\alpha)}[(1+|\rho x|^{1+\overline{\alpha}})+C(1+|\rho x|)]\\
&\le\rho^{\overline{\alpha}-\alpha}(5+6C)|x|^{1+\overline{\alpha}}\\
&\le|x|^{1+\overline{\alpha}}.
\end{align*}
On the other hand, when $\rho|x|\le\frac{1}{2}$, we get
\begin{align*}
|u_{n+1}(x)|&\le \rho^{-(1+\alpha)}(|u_n(\rho x)-h(\rho x)|+|h(\rho x)-\tilde{l}(\rho x)|)\\
&\le\rho^{-(1+\alpha)}(\rho^{1+\alpha}/2+C\rho^{1+\overline{\alpha}}|x|^{1+\overline{\alpha}})\\
&\le\frac{1}{2}+C\rho^{\overline{\alpha}-\alpha}|x|^{1+\overline{\alpha}}\\
&\le1+|x|^{1+\overline{\alpha}},
\end{align*}
where $h$ is from Lemma \ref{lem3-2}. Finally, we select $\rho=\frac{1}{2}\min\left\{\Big(\frac{1}{2C}\Big)^\frac{1}{\overline{\alpha}-\alpha},\Big(\frac{1}{10(1+C)}\Big)^\frac{1}{\overline{\alpha}-\alpha}\right\}$, and complete this proof.
\end{proof}

To show Theorem \ref{thm1}, we will exploit Lemma \ref{lem3-3} and make use of the scaling features of \eqref{main} to trace the problem back to a smallness regime. That is, it is possible to suppose that
\begin{equation}
\label{t1}
\|u\|_{L^\infty(B_1)}\le1 \quad\text{and} \quad \|f\|_{L^\infty(B_1)}\le\varepsilon
\end{equation}
with $0<\varepsilon\ll1$. The viscosity solution $u$ to \eqref{main} with \eqref{t1} is termed a normalized solution. Now checking its scaling properties permits us to work under the hypothesis \eqref{t1}. Define $v:\mathbb{R}^N\rightarrow\mathbb{R}$ as
$$
v(x):=\frac{u(x_0+rx)}{K},
$$
where $r\in(0,1)$ satisfies $B_r(x_0)\subset B_1$ and $K\ge1$ is a number to be fixed. It is easy to find that $v$ solves in the viscosity sense
\begin{equation}
\label{t2}
-\overline{\Phi}(x,|Dv|)\overline{\mathcal{I}}(v,x)=\overline{f}(x) \quad\text{in }  B_1,
\end{equation}
where $\overline{\mathcal{I}}$ is a uniformly elliptic nonlocal operator of the same ellipticity type as $\mathcal{I}$ in \eqref{main}. Here 
$$
\overline{f}(x)=\frac{r^\sigma f(x_0+rx)}{K\Phi\left(x_0+rx,\frac{K}{r}\right)}
$$
and
$$
\overline{\Phi}(x,|Dv|)=\frac{\Phi\left(x_0+rx,\frac{K}{r}|Dv|\right)}{\Phi\left(x_0+rx,\frac{K}{r}\right)}.
$$
We can know that $\overline{\Phi}(x,1)=1$ for $x\in B_1$ and $\overline{\Phi}(x,t)$ fulfills the structure conditions in $(A_4)$ with the same constants there. Furthermore, $(A_3)$--$(A_4)$ ensure
$$
\|\overline{f}\|_{L^\infty(B_1)}\le\frac{r^{s_1+\sigma}\|f\|_{L^\infty(B_1)}}{M^{-1}\overline{M}^{-1}K^{1+s_1}}\le \varepsilon,
$$
via choosing
$$
K=1+\|u\|_{L^\infty(B_1)}+\left(\frac{M\overline{M}\|f\|_{L^\infty(B_1)}}{\varepsilon}\right)^\frac{1}{1+s_1}.
$$
As a result, $v(x)$ solves the equation \eqref{t2} in the same class as \eqref{main} with the small regime in \eqref{t1}.

At this moment, the assumptions in Lemma \ref{lem3-3} have been fulfilled so that we can apply this lemma to conclude the main result. Notice that both of the sequences $\{a_n\},\{p_n\}$ converge, because they are Cauchy sequences. Thus we take an affine function $l_\infty(x)$ to establish the inequality
\begin{equation}
\label{t3}
\sup_{B_r}|u(x)-l_\infty(x)|\le Cr^{1+\alpha}
\end{equation}
for all $r\in(0,\rho]$, which is obtained by using the discrete iteration display \eqref{3-3-0} and implies the $C^{1,\alpha}$-regularity of $u$. Here the $C\ge1$ is a universal constant. The evaluation on \eqref{t3} is very standard, the details of which can be found for instance \cite{FRZ21, APS24}.


\subsection{Sharp regularity}

This subsection is devoted to establishing an improved gradient estimate at the origin. To facilitate the presentation, we focus on the special case that $\Phi(x,t)=t^p$. Now consider the following equation
\begin{equation}
\label{3-4-1}
|Du|^p\mathcal{I}(u,x)=f(x) \quad\text{in } B_1.
\end{equation}
Throughout this portion, we assume the source term $f$ on the right-hand side of \eqref{3-4-1} satisfies
\begin{equation}
\label{3-4-2}
|f(x)|\le \mathfrak{m}|x|^\theta  \quad\text{for } x\in B_1
\end{equation}
with $\theta\in(0,1)$, and we should keep in mind that the solutions of \eqref{3-4-1} is locally of class $C^{1,\alpha}(B_1)$ as soon as $\sigma$ is sufficiently close to 2.

\begin{lemma}
\label{lem3-4}
Let the conditions $(A_1),(A_2),(A_5)$ be in force. Let $u\in C(\overline{B}_1)$ be a normalized viscosity solution of Eq. \eqref{3-4-1} with $u(0)=0$. Given $\mu,\varepsilon>0$, there is a constant $\kappa$ depending on $N,\lambda,\Lambda,p,\varepsilon,\mu$ such that whenever
$$
|Du(0)|+|\sigma-2|+\|f\|_{L^\infty(B_1)}\le \kappa
$$
and
$$
|u(x)|\le \mu(1+|x|^{1+\overline{\alpha}})
$$
for $x\in\mathbb{R}^N$ with some $\overline{\alpha}\in(0,1)$, then we can find an $F$-harmonic function $h$ with $h\in C^{1,\overline{\alpha}}_{\rm loc}(B_1)$, 
so that
$$
0\in \mathcal{C}(h):=\{x\in B_1|h(x)=|Dh(x)|=0\} \quad \text{and}\quad \|u-h\|_{L^\infty(B_{3/4})}\le \varepsilon.
$$
\end{lemma}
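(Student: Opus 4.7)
The proof follows the contradiction/compactness blueprint of Lemma \ref{lem3-2}, with the extra burden of encoding the critical-point condition $h(0)=|Dh(0)|=0$ in the limit. Suppose the statement fails. Then one can extract $\mu_0,\varepsilon_0>0$ and sequences $\{\sigma_k\}\subset(1,2)$, $\{f_k\}\subset C(B_1)\cap L^\infty(B_1)$, operators $\mathcal{I}_{\sigma_k}$ obeying $(A_5)$, and normalized viscosity solutions $u_k$ of $|Du_k|^p\mathcal{I}_{\sigma_k}(u_k,x)=f_k(x)$ such that $u_k(0)=0$, $|Du_k(0)|+|\sigma_k-2|+\|f_k\|_{L^\infty(B_1)}\le 1/k$, $\|u_k\|_{L^\infty(B_1)}\le 1$, and $|u_k(x)|\le\mu_0(1+|x|^{1+\overline{\alpha}})$, while $\|u_k-h\|_{L^\infty(B_{3/4})}>\varepsilon_0$ for every candidate $h$.

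First I would invoke Proposition \ref{pro3-1} to obtain an equi-Lipschitz estimate for $u_k$ on $B_{3/4}$ (with constant uniform as $\sigma_k\to 2$), extract a locally uniform limit $u_k\to\overline{u}$, and use $(A_5)$ to pass $\mathcal{I}_{\sigma_k}$ to a uniformly $(\lambda,\Lambda)$-elliptic local operator $F$. I would then replay the test-function stability argument of Lemma \ref{lem3-2}, noting that since $\eta_k\equiv 0$ the only troublesome subcase is a test gradient $p=0$, which is handled exactly as Case 2 there via the convex perturbation $\gamma|P_T(x)|$. This yields that $\overline{u}$ is $F$-harmonic on $B_{3/4}$, and by the Caffarelli--Cabré theory it lies in $C^{1,\overline{\alpha}}_{\rm loc}$.

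The genuinely new ingredient is to secure $|D\overline{u}(0)|=0$; that $\overline{u}(0)=0$ is immediate from $u_k(0)=0$. For this I would exploit that $\Phi(x,t)=t^p$ automatically satisfies $(A_3)$--$(A_4)$ with $s_1=s_2=p$ and $M=\overline{M}=1$, so for $k$ large every $u_k$ falls inside the hypotheses of Lemma \ref{lem3-3}. That lemma, together with the summation argument leading to \eqref{t3}, produces a $C^{1,\alpha}$ estimate for $u_k$ in a fixed neighborhood of the origin that is uniform in $k$. Arzelà--Ascoli in $C^{1}$ then upgrades $u_k\to\overline{u}$ to $Du_k\to D\overline{u}$ locally uniformly, so that $|D\overline{u}(0)|=\lim_k|Du_k(0)|=0$. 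Taking $h=\overline{u}$ contradicts the standing assumption and closes the argument.

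The principal obstacle is this passage to the limit of the gradient at the critical point: the Lipschitz bound from Proposition \ref{pro3-1} alone does not deliver it, and one has to lean on the $C^{1,\alpha}$-theory just developed in Lemmas \ref{lem3-2}--\ref{lem3-3} to secure equicontinuity of $\{Du_k\}$ near the origin. Everything else is a fairly direct transcription of the Lemma \ref{lem3-2} machinery once the critical-point data are baked into the contradictory sequence.
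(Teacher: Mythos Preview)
Your proposal is correct and follows essentially the same approach as the paper. The paper likewise argues by contradiction, invokes the $C^{1,\alpha}$-regularity already established (citing Theorem~\ref{thm1} directly rather than Lemmas~\ref{lem3-2}--\ref{lem3-3} plus \eqref{t3}, which amounts to the same thing) together with Arzel\`a--Ascoli to upgrade the convergence to the $C^1$-topology and force $\overline{u}(0)=|D\overline{u}(0)|=0$, and then replays the stability argument of Lemma~\ref{lem3-2} to show the limit is $F$-harmonic and reach the contradiction with $h=\overline{u}$.
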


\begin{proof}
Argue by contradiction. If this assertion is false, then we can assume that there exist $\mu_0,\varepsilon_0>0$ and sequences of functions $\{u_k\},\{f_k\}$, and a sequence of numbers $\{\sigma_k\}$ such that
$$
|Du_k|^p\mathcal{I}_{\sigma_k}(u_k,x)=f_k(x) \quad\text{in } B_1
$$
with $u_k(0)=0$ and $\|u_k\|_{L^\infty(B_1)}\le1$, as well as
$$
|Du_k(0)|+|\sigma_k-2|+\|f_k\|_{L^\infty(B_1)}\le \frac{1}{k},
$$
where the nonlocal operator $\mathcal{I}_{\sigma_k}$ fulfills the condition $(A_5)$. However, it holds
\begin{equation}
\label{3-4-3}
\|u_k-h\|_{L^\infty(B_{3/4})}>\varepsilon_0
\end{equation}
for all $F$-harmonic functions $h$ with $0\in\mathcal{C}(h)$. Indeed, since the equation $F(D^2h)=0$ does not has explicit dependency on $h$ and $Dh$, then we may suppose, up to a transform $v(x)=h(x)-h(0)-Dh(0)\cdot x$, $h(0)=|Dh(0)|=0$.

With the help of Theorem \ref{thm1} and Arzel\`{a}-Ascoli theorem, we know that $\{u_k\}$ converges (up to a subsequence) locally uniformly to a continuous function $\overline{u}$ in $B_1$ in the $C^{1}$-topology. That is, it yields that
\begin{equation}
\label{3-4-4}
\overline{u}(0)=|D\overline{u}(0)|=0.
\end{equation}
Utilizing the condition $(A_5)$ and the fact $\sigma_k\rightarrow 2$, we can see that $\mathcal{I}_{\sigma_k}\rightarrow F$ with $F$ being a uniformly $(\lambda,\Lambda)$-elliptic operator. Furthermore, there holds $f_k\rightarrow0$ uniformly in $B_1$. The remaining procedures are almost the same as that of Lemma \ref{lem3-2}. Then we can get the result that $\overline{u}$ solves in the viscosity sense $F(D^2\overline{u})=0$ in $B_{7/8}$. From this and \eqref{3-4-4}, we arrive at a contradiction with \eqref{3-4-3} for large enough $k$ by selecting $h=\overline{u}$.
\end{proof}

\begin{lemma}
\label{lem3-5}
Under the hypotheses of Lemma \ref{lem3-4} above, given a $\mu>0$, there is a $\kappa>0$ depending on $N,\lambda,\Lambda,p,\alpha,\mu$ and a $\rho\in\left(0,\frac{1}{2}\right)$ depending only on $N,\lambda,\Lambda,\alpha$, such that if
$$
|Du(0)|+|\sigma-2|+\|f\|_{L^\infty(B_1)}\le \kappa
$$
and
$$
|u(x)|\le \mu(1+|x|^{1+\overline{\alpha}})
$$
for $x\in\mathbb{R}^N$, then one has
$$
\sup_{x\in B_\rho}|u(x)|\le \rho^{1+\alpha}
$$
for every $\alpha\in(0,\overline{\alpha})$ with $\overline{\alpha}$ coming from Lemma \ref{lem3-4}.
\end{lemma}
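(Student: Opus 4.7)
The plan is to combine the approximation from Lemma \ref{lem3-4} with the pointwise vanishing behavior of the approximating $F$-harmonic function at the origin. The argument is a one-step compactness/iteration at the base scale, parallel in spirit to Step 1 of Lemma \ref{lem3-3}, but exploiting the additional information that $h$ has both value and gradient equal to zero at $0$.

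First, for a given $\alpha \in (0, \overline{\alpha})$, I would fix a small radius $\rho \in (0, 1/2)$ and a tolerance $\varepsilon>0$ (both to be chosen shortly) and invoke Lemma \ref{lem3-4} with the same $\mu$ and this $\varepsilon$. This supplies a universal $\kappa>0$ and an $F$-harmonic function $h \in C^{1,\overline{\alpha}}_{\rm loc}(B_1)$ with $h(0)=|Dh(0)|=0$ satisfying
\[
\|u - h\|_{L^\infty(B_{3/4})} \le \varepsilon,
\]
provided $|Du(0)|+|\sigma-2|+\|f\|_{L^\infty(B_1)} \le \kappa$. Next I would use the interior $C^{1,\overline{\alpha}}$ regularity for $F$-harmonic functions (see \cite{CC95}) together with $\|h\|_{L^\infty(B_{3/4})}\le 1+\varepsilon$, which yields a universal constant $C_0 \ge 1$ such that
\[
|h(x)| = |h(x)-h(0)-Dh(0)\cdot x| \le C_0 |x|^{1+\overline{\alpha}}, \qquad x \in B_{1/2}.
\]

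Now the triangle inequality gives, for every $x \in B_\rho$ with $\rho\le 1/2$,
\[
|u(x)| \le |u(x)-h(x)| + |h(x)| \le \varepsilon + C_0 \rho^{1+\overline{\alpha}}.
\]
Since $\alpha<\overline{\alpha}$, I choose $\rho\in(0,1/2)$ small enough that $C_0 \rho^{\overline{\alpha}-\alpha} \le \tfrac{1}{2}$, so that $C_0 \rho^{1+\overline{\alpha}} \le \tfrac{1}{2}\rho^{1+\alpha}$; this fixes $\rho$ depending only on $N,\lambda,\Lambda,\alpha$. Then I fix $\varepsilon := \tfrac{1}{2}\rho^{1+\alpha}$, which in turn fixes $\kappa = \kappa(N,\lambda,\Lambda,p,\alpha,\mu)$ via Lemma \ref{lem3-4}. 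With these choices,
\[
\sup_{x \in B_\rho}|u(x)| \le \tfrac{1}{2}\rho^{1+\alpha} + \tfrac{1}{2}\rho^{1+\alpha} = \rho^{1+\alpha},
\]
which is exactly the claimed bound.

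There is no serious obstacle: the only point requiring care is the order in which the small parameters are chosen, namely $\rho$ first (from the $F$-harmonic regularity and the gap $\overline{\alpha}-\alpha>0$) and then $\varepsilon$, since $\varepsilon$ must be comparable to $\rho^{1+\alpha}$ in order to absorb the approximation error into the target estimate. The fact that $h(0)=|Dh(0)|=0$ is essential: it turns the generic bound $|h(x)-h(0)-Dh(0)\cdot x|\le C|x|^{1+\overline{\alpha}}$ into a pointwise bound on $|h(x)|$ itself, which is what allows the iteration to target a bound on $u$ (rather than on $u$ minus an affine function) and ultimately produces the improved regularity $C^{1,(\sigma-1+\theta)/(1+p)}$ at the origin in Theorem \ref{thm2}.
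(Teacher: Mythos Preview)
Your proof is correct and follows essentially the same approach as the paper's: approximate $u$ by an $F$-harmonic function $h$ with $h(0)=|Dh(0)|=0$ via Lemma \ref{lem3-4}, use the $C^{1,\overline{\alpha}}$ estimate to bound $|h(x)|\le C|x|^{1+\overline{\alpha}}$, then choose $\rho$ from the gap $\overline{\alpha}-\alpha>0$ and $\varepsilon=\tfrac12\rho^{1+\alpha}$ before applying the triangle inequality. The order of parameter choices and the role of $0\in\mathcal{C}(h)$ you highlight are exactly those in the paper.
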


\begin{proof}
Let $\varepsilon>0$ be a number to be chosen later. Thanks to Lemma \ref{lem3-4}, we find a $\kappa>0$ and an $F$-harmonic function $h$ such that $0\in\mathcal{C}(h)$ and
$$
\sup_{x\in B_{3/4}}|u(x)-h(x)|\le \varepsilon.
$$
In view of the optimal $C^{1,\overline{\alpha}}_{\rm loc}$ regularity for $h$ (see e.g., \cite{Caf89,Tex14}) together with the fact $h(0)=|Dh(0)|=0$, we derive
$$
\sup_{x\in B_r}|h(x)|\le Cr^{1+\overline{\alpha}} \quad\text{for any } r\in \Big(0,\frac{3}{4}\Big]
$$
with a universal constant $C=C(N,\lambda,\Lambda)>0$.

Next, choose two universal constants
$$
0<\rho\le\left(\frac{1}{2C}\right)^\frac{1}{\overline{\alpha}-\alpha} \quad\text{and}\quad \varepsilon=\frac{1}{2}\rho^{1+\alpha}.
$$
From this, it is easy to have
\begin{align*}
\sup_{x\in B_\rho}|u(x)|&\le\sup_{x\in B_\rho}|u(x)-h(x)|+\sup_{x\in B_\rho}|h(x)|\\
&\le \frac{1}{2}\rho^{1+\alpha}+C\rho^{\overline{\alpha}-\alpha}\rho^{1+\alpha}\\
&=\rho^{1+\alpha}.
\end{align*}
\end{proof}

\begin{lemma}
\label{lem3-6}
Let the preconditions of Lemma \ref{lem3-4} and $|f(x)|\le \mathfrak{m}|x|^\theta$ be true. Then for given $\mu>0$, one can find two small universal constants $\kappa,\rho\in(0,1)$ such that whenever
\begin{equation}
\label{3-6-1}
|\sigma-2|\le \kappa,        
\end{equation}
$$
|Du(0)|\le\kappa t^\alpha  \quad \text{for } t\in(0,\rho]
$$
and
$$
|u(x)|\le \mu(1+|x|^{1+\overline{\alpha}})
$$
for $x\in\mathbb{R}^N$ with $\overline{\alpha}$ from Lemma \ref{lem3-4}, it holds that
$$
\sup_{x\in B_t}|u(x)|\le Ct^{1+\alpha}
$$
with a universal number $C\equiv C(N,\lambda,\Lambda,\alpha)$ and
$$
\alpha\in(0,\overline{\alpha})\cap \left(0,\frac{\sigma-1+\theta}{1+p}\right].
$$
\end{lemma}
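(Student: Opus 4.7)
The strategy is to iterate Lemma~\ref{lem3-5} at the geometric scales $\rho^n$, propagating the improved flatness from the origin one scale at a time. I will prove by induction on $n \ge 0$ the discrete estimate
\begin{equation*}
\sup_{B_{\rho^n}}|u| \le \rho^{n(1+\alpha)}
\end{equation*}
for every $n$ with $\rho^n \ge t$, where $\rho$ is the universal constant supplied by Lemma~\ref{lem3-5}. The base case $n=0$ is contained in the normalization $\|u\|_{L^\infty(B_1)} \le 1$.

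For the inductive step, I set $v_n(x) := \rho^{-n(1+\alpha)}\, u(\rho^n x)$. The scaling of $\mathcal{I}_\sigma$ shows that $v_n$ solves in $B_1$ an equation of the same structural type, $|Dv_n|^p\, \bar{\mathcal{I}}_\sigma(v_n, x) = \bar{f}_n(x)$, with $\bar{\mathcal{I}}_\sigma$ of the same ellipticity and $\bar{f}_n(x) = \rho^{n(\sigma - 1 - \alpha(1+p))}\, f(\rho^n x)$. Combined with the hypothesis $|f(\rho^n x)| \le \mathfrak{m}\rho^{n\theta}|x|^\theta$, the sharp choice $\alpha \le (\sigma-1+\theta)/(1+p)$ ensures the exponent of $\rho^n$ is nonnegative, yielding $\|\bar{f}_n\|_{L^\infty(B_1)} \le \mathfrak{m}$ uniformly in $n$; a preliminary rescaling $u \mapsto u/A$ with $A$ depending only on $\mathfrak{m}$ and universal constants brings this below the smallness threshold $\kappa_0$ from Lemma~\ref{lem3-5}. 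The gradient hypothesis transforms as $|Dv_n(0)| = \rho^{-n\alpha}|Du(0)| \le \kappa\,(t/\rho^n)^\alpha \le \kappa$ since $\rho^n \ge t$, while $\|v_n\|_{L^\infty(B_1)} \le 1$ follows from the induction and $v_n(0) = 0$ from $u(0) = 0$. Finally, the growth condition $|v_n(x)| \le C\mu(1 + |x|^{1+\overline{\alpha}})$ is propagated uniformly in $n$ by splitting into $|x| \le 1$ (inductive $L^\infty$ bound), $1 < |x| \le \rho^{-n}$ (chaining the intermediate inductive bounds at the scales $\rho^k$ with $k \le n$ to obtain $|v_n(x)| \le C|x|^{1+\alpha}$), and $|x| > \rho^{-n}$ (the a priori growth of $u$ coupled with favorable powers of $\rho^n$); this is essentially the second step of the proof of Lemma~\ref{lem3-3}. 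With all hypotheses verified, Lemma~\ref{lem3-5} applied to $v_n$ gives $\sup_{B_\rho}|v_n| \le \rho^{1+\alpha}$, which translates to $\sup_{B_{\rho^{n+1}}}|u| \le \rho^{(n+1)(1+\alpha)}$ and closes the induction.

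For arbitrary $t \in (0, \rho]$ satisfying $|Du(0)| \le \kappa t^\alpha$, choose the integer $n \ge 0$ with $\rho^{n+1} < t \le \rho^n$ and use the discrete conclusion to obtain
\begin{equation*}
\sup_{B_t}|u| \le \sup_{B_{\rho^n}}|u| \le \rho^{n(1+\alpha)} \le \rho^{-(1+\alpha)}\, t^{1+\alpha},
\end{equation*}
which yields the claimed estimate with $C = \rho^{-(1+\alpha)}$ once the universal constants from the preliminary rescaling are absorbed. The main technical obstacle I anticipate is the uniform-in-$n$ propagation of the infinity-growth bound for $v_n$: the constant $C\mu$ must not deteriorate as $n \to \infty$, which relies on the strict inequality $\alpha < \overline{\alpha}$ to make the relevant geometric sums converge, while the equally sharp choice $\alpha(1+p) \le \sigma - 1 + \theta$ is what prevents the rescaled right-hand side $\bar{f}_n$ from blowing up.
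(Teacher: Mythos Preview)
Your proof is correct and follows essentially the same approach as the paper: an induction on the discrete scales $\rho^n$, rescaling $u$ to $v_n(x)=\rho^{-n(1+\alpha)}u(\rho^n x)$, checking that the rescaled source $\bar f_n$ stays small thanks to $\alpha(1+p)\le\sigma-1+\theta$, verifying $|Dv_n(0)|\le\kappa$ from the hypothesis $|Du(0)|\le\kappa t^\alpha$ with $t\le\rho^n$, propagating the growth bound $|v_n(x)|\le C(1+|x|^{1+\overline\alpha})$, and then invoking Lemma~\ref{lem3-5}. The paper's proof is essentially identical; it starts the induction at $k=1$ rather than $n=0$ and simply refers back to Lemma~\ref{lem3-3} for the propagation of the growth condition, whereas you sketch the three-region splitting explicitly.
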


\begin{proof}
We verify this claim by induction argument. 
Observe that we may suppose that $\|f\|_{L^\infty(B_1)}\leq\kappa$ for $\kappa$ being from Lemma \ref{lem3-4} by scaling and normalization. First of all, we need to justify that for $k\in\mathbb{N}$
\begin{equation}
\label{3-6-2}
\sup_{x\in B_{\rho^k}}|u(x)|\le \rho^{k(1+\alpha)}
\end{equation}
under the conditions that \eqref{3-6-1} and $|Du(0)|\le \kappa\rho^{k\alpha}$ with $\rho$ from Lemma \ref{lem3-5}. For $k=1$, \eqref{3-6-2} follows immediately from Lemma \ref{lem3-5}. Suppose the conclusion \eqref{3-6-2} holds true for $k=1,2,\cdots,n$. Now we prove \eqref{3-6-2} for $k=n+1$. Set $u_n:\mathbb{R}^N\rightarrow\mathbb{R}$ as
$$
u_n(x)=\frac{u(\rho^n x)}{\rho^{n(1+\alpha)}}.
$$
Then $u_n$ is a viscosity solution to the following equation
$$
|Du_n|^p\overline{\mathcal{I}}(u_n,x)=\overline{f}(x), 
$$
where the nonlocal operator $\overline{\mathcal{I}}$ carries the same uniform ellipticity properties as the operator $\mathcal{I}$ in \eqref{main}, and
$$
\overline{f}(x)=\rho^{n\sigma-n(1+\alpha)-np\alpha}f(\rho^n x).
$$
Remember $\alpha\le \frac{\sigma-1+\theta}{1+p}$ and $|f(x)|\le \mathfrak{m}|x|^\theta$. There holds that
$$
|\overline{f}(x)|\le \mathfrak{m}\rho^{n(\sigma-1+\theta-\alpha-p\alpha)}|x|^\theta\le \mathfrak{m}.
$$

On the other hand, via the hypotheses of induction, we know $|Du(0)|\le \kappa\rho^{n\alpha}$, and so we get $|Du_n(0)|\le \kappa$.
In addition, we can demonstrate
$$
|u_n(x)|\le 1+|x|^{1+\overline{\alpha}} \quad\text{for } x\in\mathbb{R}^N.
$$
The evaluation of this inequality is analogous to that in Lemma \ref{lem3-3}. Thus we drop it here. At this point, $u_n$ falls into the framework of Lemma \ref{lem3-5}, and hence it yields
$$
\sup_{x\in B_{\rho}}|u_n(x)|\le \rho^{1+\alpha}.
$$
Rescaling back, one gets
$$
\sup_{x\in B_{\rho^{n+1}}}|u(x)|\le \rho^{(n+1)(1+\alpha)}.
$$
By now, the proof of \eqref{3-6-2} is finished. Finally, for $t\in(0,\rho]$, there exists an integer $n>0$ such that $\rho^{n+1}<t\le\rho^n$, which leads to
$$
\sup_{x\in B_t}|u_n(x)|\le \sup_{x\in B_{\rho^n}}|u(x)|\le \rho^{n(1+\alpha)}\le \rho^{-(1+\alpha)}t^{1+\alpha},
$$
provided $|Du(0)|\le\kappa t^\alpha$.
Now we complete the whole proof.
\end{proof}

Now we end this section with demonstrating Theorem \ref{thm2}.

\medskip

\noindent\textbf{Proof of Theorem \ref{thm2}.}
First notice that we can assume that $u(0)=0,\|u\|_{L^\infty(B_1)}\le1$ and $\|f\|_{L^\infty(B_1)}\le\kappa$ with $\kappa$ small enough, by translation and normalization. We are going to prove this conclusion by implementing a dichotomy argument, which is divided into two scenarios. Let $0<\kappa,\rho<1$ be two universal constants from Lemma \ref{lem3-6}.

\medskip

\textbf{Case 1}. $|Du(0)|\le \kappa\rho^\alpha$. Set a number
\begin{equation}
\label{t4-1}
\tau=\left(\frac{|Du(0)|}{\kappa}\right)^\frac{1}{\alpha}.
\end{equation}
In the first subcase $\tau\le t\le\rho$, it follows from \eqref{t4-1} that
$$
|Du(0)|\le \kappa t^\alpha.
$$
Thereby, exploiting Lemma \ref{lem3-6} leads to
$$
\sup_{x\in B_t}|u(x)|\le Ct^{1+\alpha}
$$
with a universal $C>0$. Furthermore, there holds that
\begin{align*}
\sup_{x\in B_t}|u(x)-Du(0)\cdot x|&\le Ct^{1+\alpha}+|Du(0)|t\\
&\le(C+\kappa)t^{1+\alpha}.
\end{align*}
Additionally, in the second subcase $0<t<\tau(\le \rho)$, we construct an auxiliary function
$$
u_\tau(x):=\frac{u(\tau x)}{\tau^{1+\alpha}}.
$$
It is easy to see that
\begin{equation}
\label{t4-1-1}
\sup_{x\in B_1}|u_\tau(x)|=\sup_{x\in B_1}\left|\frac{u(\tau x)}{\tau^{1+\alpha}}\right|\le C.
\end{equation}
Indeed, we have $|Du(0)|=\kappa\tau^\alpha$ by \eqref{t4-1} and employ again Lemma \ref{lem3-6} to obtain
$$
\sup_{x\in B_\tau}|u(x)|\le C\tau^{1+\alpha}.
 $$
Therefore, we can readily examine that $u_\tau\in C(\overline{B}_1)$ is a viscosity solution with \eqref{t4-1-1} to the problem
\begin{equation}
\label{t4-2}
|Du_\tau|^p\overline{\mathcal{I}}(u_\tau,x)=\overline{f}(x)  \quad \text{in } B_1,
\end{equation}
where the nonlocal operator $\overline{\mathcal{I}}$ has the same uniform ellipticity constants as the operator $\mathcal{I}$ in \eqref{main}, and $\overline{f}(x)=\tau^{\sigma-(1+\alpha)-p\alpha}f(\tau x)$. Note that $|\overline{f}(x)|\le \mathfrak{m}\tau^{\sigma-1+\theta-\alpha(1+p)}|x|^\theta\le\mathfrak{m}$ due to $\alpha\le\frac{\sigma-1+\theta}{1+p}$. Since the source term $\overline{f}$ in \eqref{t4-2} has a universal bound, the interior $C^{1,\alpha}$-regularity for $u_\tau$ follows from Theorem \ref{thm1}. In addition, recall the fact $|Du_\tau(0)|=\left|\frac{Du(0)}{\tau^\alpha}\right|=\kappa$. Then we may take a small universal radius $r>0$, independent of $\tau$, such that
$$
\frac{\kappa}{2}\le|Du_\tau(x)|\le 2\kappa  \quad\text{in } B_r.
$$
As a matter of fact, by $u_\tau\in C^{1,\alpha}_{\rm loc}(B_1)$, there holds that, for some universal $C\ge1$,
$$
|Du_\tau(x)-Du_\tau(0)|\le C|x|^\alpha
$$
and further
$$
\kappa-C|x|^\alpha\le |Du_\tau(x)|\le \kappa+C|x|^\alpha.
$$
From this, Eq. \eqref{t4-2} becomes
$$
\overline{\mathcal{I}}(u_\tau,x)=|Du_\tau|^{-p}\overline{f}(x)  \quad \text{in } B_r.
$$
That is to say, $u_\tau$ solves a uniformly elliptic nonlocal equation with a universally bounded source term on the right-hand side. As a consequence, through the regularity theory available in \cite{CS09, CS11}, such solution $u_\tau$ to the last problem is almost as regular as an $F$-harmonic function, as long as the parameter $\sigma$ is sufficiently close to 2. (This moment, the operator $\mathcal{I}_\sigma$ approaches a local uniformly $(\lambda,\Lambda)$-elliptic operator $F$.) That is, $u_\tau\in C^{1,\overline{\alpha}^-}_{\rm loc}$. We further derive
$$
\sup_{x\in B_\delta}|u_\tau(x)-Du_\tau(0)\cdot x|\le C\delta^{1+\alpha}
$$
for every $0<\delta\le \frac{r}{2}$ and $0<\alpha<\overline{\alpha}$. Scaling back, it yields that
$$
\sup_{x\in B_t}|u(x)-Du(0)\cdot x|\le Ct^{1+\alpha}
$$
for all $0<t\le\frac{\tau r}{2}$. When $t$ belongs to the interval $\Big(\frac{\tau r}{2},\tau\Big)$, we apply the first subcase with $t:=\tau$ to arrive at
\begin{align*}
\sup_{x\in B_t}|u(x)-Du(0)\cdot x|&\le\sup_{x\in B_\tau}|u(x)-Du(0)\cdot x|\\
&\le C\left(\frac{2}{r}\right)^{1+\alpha}\left(\frac{\tau r}{2}\right)^{1+\alpha}\\
&\le C\left(\frac{2}{r}\right)^{1+\alpha} t^{1+\alpha}=:Ct^{1+\alpha}.
\end{align*}
Here note $r$ is a universal constant. At this time, we infer $u $ is of class $C^{1,\alpha}$ at the origin in Case 1.

\medskip

\textbf{Case 2}. $|Du(0)|>\kappa \rho^\alpha$. In this scenario, consider a function
$$
v(x)=\frac{\kappa \rho^\alpha}{|Du(0)|}u(x),
$$
and then $|Dv(0)|=\kappa \rho^\alpha$, which is back to Case 1. The proof is complete now.    \hfill $\Box$


\section{$C^{1,\alpha}$-regularity for the case $\sigma\in(1,2)$}
\label{sec3}

In this part, we under the condition $\sigma\in(1,2)$ analyze the regularity theory for Eq. \eqref{main} via approximated viscosity solutions.

\begin{lemma}
\label{lem4-1}
Let the hypotheses $(A_1),(A_2)$ be in force. Suppose that $u\in C(\overline{B}_1)$ is an approximated viscosity solution of \eqref{2-1} with a continuous function $\Phi$. Then $u$ is a viscosity solution to \eqref{2-1} as well.
\end{lemma}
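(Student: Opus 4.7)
The plan is to recover the viscosity inequalities for $u$ by a standard stability argument applied to the viscosity inequalities already satisfied by the approximating sequence $\{u_k\}$. I shall treat the subsolution inequality; the supersolution case is symmetric. Let $\varphi\in C^2(\mathbb{R}^N)$ and $x_0\in B_1$ be such that $u-\varphi$ attains a local maximum at $x_0$. After perturbing $\varphi$ by $|x-x_0|^4$ if needed, I may assume the maximum is strict. Since $u_k\to u$ locally uniformly in $B_1$, the usual extraction provides points $x_k\to x_0$ at which $u_k-\varphi$ attains a local maximum, and then for every fixed $\delta\in(0,1)$ the viscosity inequality for $u_k$ applied to \eqref{2-2} reads
$$
-\Phi\bigl(x_k,|D\varphi(x_k)+\eta_k|+d_k\bigr)\,\mathcal{I}^{\delta}(u_k,\varphi,x_k)\le f(x_k).
$$

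The second step is to pass to the limit $k\to\infty$ in each factor. By continuity of $\Phi$ and of $f$, together with $x_k\to x_0$, $\eta_k\to\eta$, $d_k\to 0$, $D\varphi(x_k)\to D\varphi(x_0)$, one has
$$
\Phi\bigl(x_k,|D\varphi(x_k)+\eta_k|+d_k\bigr)\longrightarrow\Phi\bigl(x_0,|D\varphi(x_0)+\eta|\bigr),\qquad f(x_k)\to f(x_0).
$$
For the nonlocal factor $\mathcal{I}^{\delta}(u_k,\varphi,x_k)=\inf_j\sup_i\bigl(I_{K_{ij}}[B_\delta](\varphi,x_k)+I_{K_{ij}}[B_\delta^c](u_k,x_k)\bigr)$ the local piece involving $\varphi$ passes to the limit from the $C^2$-regularity of $\varphi$ and $x_k\to x_0$. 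The nonlocal piece is the delicate one: the uniform bound $|u_k(x)|\le\mu(1+|x|^{1+\alpha})$ with $1+\alpha<\sigma$ supplies an integrable majorant against the kernel $C_{N,\sigma}|y|^{-N-\sigma}$, so that, combined with the local uniform convergence $u_k\to u$ on every compact subset of $B_1$, dominated convergence yields $I_{K_{ij}}[B_\delta^c](u_k,x_k)\to I_{K_{ij}}[B_\delta^c](u,x_0)$ uniformly in the pair $(i,j)$, which in turn permits interchanging the limit with $\inf_j\sup_i$. Hence $\mathcal{I}^\delta(u_k,\varphi,x_k)\to\mathcal{I}^\delta(u,\varphi,x_0)$, and combining everything delivers
$$
-\Phi\bigl(x_0,|D\varphi(x_0)+\eta|\bigr)\,\mathcal{I}^{\delta}(u,\varphi,x_0)\le f(x_0),
$$
which is precisely the required subsolution inequality at $x_0$.

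The main obstacle is the handling of the nonlocal term: establishing uniform-in-$(i,j)$ tail control for $I_{K_{ij}}[B_\delta^c](u_k,x_k)$ so that dominated convergence and the exchange with $\inf_j\sup_i$ are legitimate. A secondary subtlety appears in the degenerate regime $D\varphi(x_0)+\eta=0$: here $\Phi\bigl(x_0,|D\varphi(x_0)+\eta|\bigr)$ can vanish and the limit inequality risks being trivial. This is exactly why Definition \ref{def2} enforces $\eta_k\equiv 0$ whenever $\eta=0$ and the calibration $kd_k^{s_2}\to\infty$: together with the two-sided control $M^{-1}\overline{M}^{-1}t^{s_2}\le\Phi(x,t)\le M\overline{M}t^{s_1}$ for small $t$ provided by $(A_3)$–$(A_4)$, these ensure that along the approximating sequence the defect term $\Phi(x_k,|D\varphi(x_k)+\eta_k|+d_k)^{-1}f(x_k)$ behaves consistently, so that the limit inequality remains informative and the standard viscosity formulation at critical points is recovered. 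The supersolution inequality is obtained by the mirror argument, completing the proof.
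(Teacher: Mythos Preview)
Your core argument is correct and matches the paper's approach: both pass to the limit in the product $-\Phi(x_k,|D\varphi(x_k)+\eta_k|+d_k)\,\mathcal{I}^{\delta}(u_k,\varphi,x_k)\le f(x_k)$ using continuity of $\Phi$ and $f$ together with convergence of the nonlocal term (the paper cites \cite[Lemma~5]{CS11} for the latter, while you spell out the dominated-convergence justification).

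Your final paragraph, however, is misplaced. In this lemma you never divide by $\Phi$; you take the limit in the \emph{product}, so continuity of $\Phi$ alone suffices even when $D\varphi(x_0)+\eta=0$ and $\Phi(x_0,0)=0$---the limit inequality $0\le f(x_0)$ is precisely what Definition~\ref{def1} requires at such a point, so there is nothing to rescue. The special clauses in Definition~\ref{def2} (namely $\eta_k\equiv 0$ when $\eta=0$ and $kd_k^{s_2}\to\infty$) are not used here at all, nor are $(A_3)$--$(A_4)$, which are not among the hypotheses of this lemma. Those conditions matter in the paper's Lemma~\ref{lem4-2}, where one \emph{does} divide by $\Phi_k$ to show that the limit solves the uniformly elliptic equation $\mathcal{I}(u,x)=0$; you have conflated the two lemmas.
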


\begin{proof}
We only prove $u$ is a viscosity subsolution because the case of supersolution is analogous. Let $x_0\in B_1$ and $\varphi$ be a test function such that $u-\varphi$ attains its local maximum at $x_0$. Through Definition \ref{def2} there is a sequence $\{u_k\}$ satisfying $u_k\rightarrow u$ locally uniformly in $B_1$, so we have that there exists a sequence of smooth functions $\{\varphi_k\}$ such that $\varphi_k\rightarrow \varphi$ locally uniformly in $B_1$ and $u_k-\varphi_k$ reaches its local maximum at some point $x_k$ with $x_k\rightarrow x_0$. Owing to $u_k$ a viscosity solution of \eqref{2-2}, then
$$
-\Phi(x_k,|D\varphi_k(x_k)+\eta_k|+d_k)\mathcal{I}(u_k,\varphi_k,x_k)\le f(x_k),
$$
where $\eta_k\rightarrow\eta$ and $d_k\rightarrow0$. Remember that the functions $\Phi$ and $f$ are continuous, and that the convergence relation $\mathcal{I}(u_k,\varphi_k,x_k)\rightarrow\mathcal{I}(u,\varphi,x_0)$ follows from \cite[Lemma 5]{CS11}. Then we pass to the limit as $k\rightarrow\infty$ and derive
$$
-\Phi(x,|D\varphi(x_0)+\eta|)\mathcal{I}(u,\varphi,x_0)\le f(x_0).
$$
The proof is finished now.
\end{proof}

\begin{lemma}
\label{lem4-2}
Suppose that $\{u_k\}\subset C(\overline{B}_1)$ and $\{\eta_k\}\subset\mathbb{R}^N$ satisfy
$$
-\Phi_k(x,|Du_k+\eta_k|)\mathcal{I}(u_k,x)=f_k \quad \text{in } B_1
$$
in the approximated viscosity solution sense, where $f_k\in C(B_1)\cap L^\infty(B_1)$ and $\Phi_k$ fulfills $(A_3),(A_4)$ for all $k\in\mathbb{N}$. In addition, assume that there is such a function $u\in C(\overline{B}_1)\cap L^1_\sigma(\mathbb{R}^N)$ that $u_k\rightarrow u$ locally uniformly in $B_1$ and $\|f_k\|_{L^\infty(B_1)}\le \frac{1}{k}$. Then, $u$ is a viscosity solution to
$$
\mathcal{I}(u,x)=0   \quad \text{in } B_1.
$$
\end{lemma}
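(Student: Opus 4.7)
The plan is to prove that $u$ is both a viscosity subsolution and supersolution of $\mathcal I(u,x)=0$; I describe only the subsolution case, as the supersolution argument is entirely symmetric. First, by Lemma \ref{lem4-1}, every $u_k$ is a genuine viscosity solution of its own equation in the sense of Definition \ref{def1}, so I may work with the standard viscosity notion throughout the rest of the argument.

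Fix $x_0 \in B_1$ and a $C^2$ test function $\varphi$ such that $u - \varphi$ attains a \emph{strict} local maximum at $x_0$ (strictness may be enforced by adding $|x-x_0|^4$ to $\varphi$). The main difficulty is that $\Phi_k(x_k, |D\varphi(x_k)+\eta_k|)$ may collapse to zero if $\eta_k \to -D\varphi(x_0)$, blocking the division step in the viscosity inequality. To bypass this degeneracy, I test against the one-parameter family of linear perturbations
\[
\varphi_\lambda(x) := \varphi(x) + \lambda\cdot(x-x_0), \qquad \lambda \in \mathbb{R}^N\setminus\{0\}.
\]
For each small $\lambda\neq 0$, the strict maximum of $u-\varphi$ at $x_0$ ensures that $u-\varphi_\lambda$ still attains a local maximum at some $x_*(\lambda)$ with $x_*(\lambda)\to x_0$ as $\lambda\to 0$; by local uniform convergence $u_k\to u$, the function $u_k - \varphi_\lambda$ in turn attains a local maximum at some $x_{k,\lambda}\to x_*(\lambda)$ as $k\to\infty$, and the viscosity inequality for $u_k$ yields, for every $\delta>0$,
\[
-\Phi_k\bigl(x_{k,\lambda},|D\varphi_\lambda(x_{k,\lambda})+\eta_k|\bigr)\,\mathcal I^\delta(u_k,\varphi_\lambda,x_{k,\lambda})\le f_k(x_{k,\lambda}).
\]

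The key quantitative observation is that for each $\lambda\neq 0$ the coefficient $\Phi_k$ is bounded away from $0$ uniformly in $k$. If $\{\eta_k\}$ is unbounded, then $|D\varphi_\lambda(x_{k,\lambda})+\eta_k|\to\infty$ and $(A_4)$ forces $\Phi_k\to\infty$; if $\{\eta_k\}$ is bounded, one extracts $\eta_k\to \bar\eta$ along a subsequence and the gradient norm converges to $|\lambda+D\varphi(x_*(\lambda))+\bar\eta|$, which is comparable to $|\lambda|$ for $|\lambda|$ small (since $D\varphi(x_*(\lambda))+\bar\eta\to 0$ as $\lambda\to 0$). In either regime $(A_3)$--$(A_4)$ deliver a bound of the form $\Phi_k \ge c\,|\lambda|^{s_2}/(M\overline{M})$, so that dividing by $\Phi_k$ and using $\|f_k\|_{L^\infty(B_1)}\le 1/k$ produces
\[
-\mathcal I^\delta(u_k,\varphi_\lambda,x_{k,\lambda})\le \frac{f_k(x_{k,\lambda})}{\Phi_k(\,\cdot\,)}\xrightarrow{k\to\infty} 0.
\]
Passing to the limit in the nonlocal term via the stability result of \cite{CS11} (using $u_k\to u$ locally uniformly together with $x_{k,\lambda}\to x_*(\lambda)$) then yields $\mathcal I^\delta(u,\varphi_\lambda,x_*(\lambda)) \ge 0$ for every $\lambda\neq 0$ and every $\delta>0$.

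Finally, letting $\lambda\to 0$ — so that $x_*(\lambda)\to x_0$ and $\varphi_\lambda\to \varphi$ in $C^2$ — continuity of $\mathcal I^\delta$ with respect to its base point and test function delivers $\mathcal I^\delta(u,\varphi,x_0)\ge 0$ for every $\delta>0$, which is precisely the viscosity subsolution inequality for $\mathcal I(u,x)=0$ at $x_0$. I expect the principal obstacle to be a clean organization of this double limit (first $k\to\infty$ with $\lambda$ fixed, then $\lambda\to 0$) with uniform control of the nonlocal integrals in both $k$ and $\lambda$; this is a technical but essentially standard extension of the stability estimates developed in \cite{CS09,CS11}.
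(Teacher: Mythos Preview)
Your route is genuinely different from the paper's, and it carries a real risk at the step you treat as routine. The paper does \emph{not} pass through Lemma~\ref{lem4-1}; instead it exploits the very structure of Definition~\ref{def2} that you discard. For each $u_k$ it goes back to the approximating layer $u_k^i$, which solves $-\Phi_k(x,|Du_k^i+\eta_k^i|+d_i)\mathcal I(u_k^i,x)=f_k$ with the built-in regularization $d_i>0$. Taking a diagonal sequence $u_k^k$ gives a family that still converges to $u$ and for which the coefficient obeys $\Phi_k(\cdot,|\cdot|+d_k)\ge (M\overline M)^{-1}d_k^{s_2}$ automatically; the condition $kd_k^{s_2}\to\infty$ in Definition~\ref{def2} (which is there precisely for this lemma) then forces $|f_k|/\Phi_k\le C/(kd_k^{s_2})\to 0$ with no test-function manipulation at all. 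Your strategy throws away this $d_k$ information via Lemma~\ref{lem4-1} and then tries to recreate nondegeneracy by hand with the linear tilt $\varphi_\lambda$.

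The gap is in the sentence ``comparable to $|\lambda|$ for $|\lambda|$ small (since $D\varphi(x_*(\lambda))+\bar\eta\to 0$).'' Knowing only that $D\varphi(x_*(\lambda))+\bar\eta\to 0$ is not enough to compare it with $|\lambda|$: with the quartic penalization $|x-x_0|^4$ you use for strictness, one typically has $|x_*(\lambda)-x_0|\sim|\lambda|^{1/3}$, hence $|D\varphi(x_*(\lambda))-D\varphi(x_0)|=O(|\lambda|^{1/3})$, which \emph{dominates} $|\lambda|$; the perturbed gradient $\lambda+D\varphi(x_*(\lambda))+\bar\eta$ can then vanish for particular $\lambda$, and your lower bound $\Phi_k\ge c|\lambda|^{s_2}$ fails. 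To make your approach rigorous you would need to prove that the zero set $\{\lambda:\lambda+D\varphi(x_*(\lambda))+\bar\eta=0\}$ does not accumulate at the origin, which is not obvious for merely continuous $u$. The paper's diagonal argument sidesteps this issue entirely and is the cleaner path.
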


\begin{proof}
Via the definition of approximated viscosity solution, we can find sequences $\{u^i_k\}_i\subset C(B_1)\cap L^1_\sigma(\mathbb{R}^N)$, $\{\eta^i_k\}_i\subset\mathbb{R}^N$ and $\{d_i\}_i\subset\mathbb{R}^+$, satisfying that $u^i_k\rightarrow u_k$
locally uniformly in $B_1$ and $d_i\rightarrow0$ and $id^{s_2}_i\rightarrow\infty$ as $i\rightarrow\infty$, such that $u^i_k$ solves in viscosity sense
$$
-\Phi_k(x,|Du^i_k+\eta^i_k|+d_i)\mathcal{I}(u^i_k,x)=f_k \quad \text{in } B_1
$$
for each fixed $k$. Now consider the sequence $\{u^k_k\}$, the elements of which are the solutions to
$$
-\Phi_k(x,|Du^k_k+\eta^k_k|+d_k)\mathcal{I}(u^k_k,x)=f_k \quad \text{in } B_1
$$
for every $k$. Obviously, $u^k_k$ converges uniformly to $u$ locally in $B_1$. Besides, we can see
\begin{align*}
-\mathcal{I}(u^k_k,x)&\le\frac{\|f_k\|_{L^\infty(B_1)}}{\Phi_k(x,|Du^k_k+\eta^k_k|+d_k)}\\
&\le \frac{\overline{M}M\|f_k\|_{L^\infty(B_1)}}{\min\{(|Du^k_k+\eta^k_k|+d_k)^{s_1},(|Du^k_k+\eta^k_k|+d_k)^{s_2}\}}\\
&\le\frac{\overline{M}M}{kd_k^{s_2}},
\end{align*}
where we have utilized the properties of $\Phi_k$, and the relations $0\le s_1\le s_2$ and $0<d_k<1$ as $k$ is large enough. It follows from the fact $kd^{s_2}_k\rightarrow\infty$ ($k\rightarrow\infty$) in Definition \ref{def2} that $u$ is a viscosity subsolution of $-\mathcal{I}(u,x)\le0$ in $B_1$. The result that $u$ is a supersolution could be shown similarly.
\end{proof}

Next, consider the following equation
\begin{equation}
\label{4-1}
-\Phi(x,|Du_k|+d_k)\mathcal{I}(u_k,x)=f(x) \quad \text{in } B_1
\end{equation}
for any $k\in\mathbb{N}$. 
The forthcoming lemma states the viscosity solutions of \eqref{4-1} are locally Lipschitz continuous, which allows us to pass to the limit of the sequence of solutions $\{u_k\}$ by compactness argument.

\begin{lemma}
\label{lem4-3}
Let the hypotheses $(A_1)$--$(A_4)$ hold true. Suppose that $u_k\in C(\overline{B}_1)$ is a viscosity solution to \eqref{4-1}. Then $u_k$ has local Lipchitz continuity in $B_1$, that is,
$$
|u_k(x)-u_k(y)|\le C|x-y| \qquad \text{for } x,y\in B_{\frac{1}{2}}.
$$
Here the positive constant $C$ is independent of $d_k$.
\end{lemma}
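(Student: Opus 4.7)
The plan is to carry out the Ishii--Lions doubling-of-variables argument exactly as in the proof of Proposition~\ref{pro3-1}, but with the vector $\eta$ replaced by $0$ and with the degenerate quantity $|Du_k|+d_k$ in place of $|Du_k+\eta|$. The key point that must be tracked throughout is that every lower bound used on the gradient argument of $\Phi$ should come entirely from the test function and not from $d_k$, so that the resulting Lipschitz constant is independent of $d_k$.

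Concretely, I would fix $k$ and set
$$\varphi(x,y)=L\,\omega(|x-y|)+h(y),\qquad \Psi(x,y)=u_k(x)-u_k(y)-\varphi(x,y),$$
with the same $\omega$, $h$ and cutoff as in Proposition~\ref{pro3-1}, and let $(\bar x,\bar y)\in\overline B_1\times\overline B_1$ realize the maximum. Assuming for contradiction $\Psi(\bar x,\bar y)>0$, the argument there shows $\bar y\in B_{3/4}$, $|\bar x|\le 7/8$, $\bar x\ne\bar y$, and $|\bar x-\bar y|\le 2L^{-1}\operatorname{osc}_{B_1}u_k$, and yields (for $L$ large enough depending on $\operatorname{osc}_{B_1}u_k$) that
$$\tfrac{3}{16}L\le|D_x\varphi(\bar x,\bar y)|\le L,\qquad \tfrac{L}{8}\le|-D_y\varphi(\bar x,\bar y)|\le \tfrac{17}{16}L.$$
Writing down the two viscosity inequalities for the subsolution and supersolution test,
$$-\Phi(\bar x,|D_x\varphi(\bar x,\bar y)|+d_k)\,\mathcal{I}^\delta(u_k,\varphi(\cdot,\bar y),\bar x)\le f(\bar x),$$
$$-\Phi(\bar y,|-D_y\varphi(\bar x,\bar y)|+d_k)\,\mathcal{I}^\delta(u_k,-\varphi(\bar x,\cdot),\bar y)\ge f(\bar y),$$
I would then use that $|D_x\varphi|+d_k\ge |D_x\varphi|\ge \tfrac{3L}{16}\ge 1$ (for $L\ge 16/3$) and analogously for the $y$-term. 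By $(A_3)$ and the almost-monotonicity in $(A_4)$, this gives
$$\Phi(\bar x,|D_x\varphi|+d_k)\ge M^{-1}\,\overline M^{-1}\bigl(|D_x\varphi|+d_k\bigr)^{s_1}\ge C\,L^{s_1},$$
uniformly in $d_k\ge 0$, and similarly at $\bar y$. Dividing the two viscosity inequalities by these $\Phi$-factors and subtracting yields
$$\mathcal I^\delta(u_k,\varphi(\cdot,\bar y),\bar x)-\mathcal I^\delta(u_k,-\varphi(\bar x,\cdot),\bar y)\ge -\,C\,\|f\|_{L^\infty(B_1)}\,L^{-s_1}\ge -C\|f\|_{L^\infty(B_1)}.$$

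From this point on, the estimate of the left-hand side in terms of the extremal nonlocal operators is completely identical to the one carried out in the proofs of \cite[Lemma~3.1]{APS24}/\cite[Lemma~2.2]{PT21} invoked in Proposition~\ref{pro3-1}; it produces the bound
$$-C\bigl(\|f\|_{L^\infty(B_1)}+\operatorname*{osc}_{B_1}u_k+\|u_k\|_{L^1_\sigma(\mathbb{R}^N)}+1\bigr)\le -\,C\,L\,|\bar x-\bar y|^{1-\sigma+\alpha(N+2-\sigma)},$$
and, choosing $\alpha>0$ so small that the exponent of $|\bar x-\bar y|$ is negative and using $|\bar x-\bar y|\le 2L^{-1}\operatorname{osc}_{B_1}u_k$, one concludes that choosing $L$ of order $\|f\|_{L^\infty(B_1)}+\|u_k\|_{L^\infty(B_1)}+\|u_k\|_{L^1_\sigma(\mathbb{R}^N)}+1$ (which is independent of $d_k$) contradicts $\Psi(\bar x,\bar y)>0$. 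Hence $\Psi\le 0$ in $\overline B_1\times \overline B_1$, which translates into the desired Lipschitz bound on $B_{1/2}$ with constant $C$ independent of $d_k$.

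The only genuinely new point compared to Proposition~\ref{pro3-1} is the uniformity in $d_k$, and the main obstacle to avoid is the temptation to use the splitting into $|\eta|$ large/small: here $\eta=0$, so one must be careful that the lower bound on the gradient entering $\Phi$ comes from $|D_x\varphi|$ alone. Because $d_k\ge 0$ is added, not subtracted, the lower bounds $|D_x\varphi|+d_k\ge cL$ and $|-D_y\varphi|+d_k\ge cL$ are automatic, and the almost-increasing property of $t\mapsto\Phi(\cdot,t)/t^{s_1}$ together with the normalization $(A_3)$ transfers this uniformly to $\Phi$. Consequently, the single ``large gradient'' branch of the Ishii--Lions scheme suffices and the Lipschitz constant depends only on $N,\lambda,\Lambda,s_1,\sigma,M,\overline M$, $\|u_k\|_{L^\infty(B_1)}$, $\|u_k\|_{L^1_\sigma(\mathbb{R}^N)}$ and $\|f\|_{L^\infty(B_1)}$, as required.
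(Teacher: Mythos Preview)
Your proposal is correct and follows exactly the approach the paper intends: the authors state only that ``the proof of this lemma is almost the same as that of Proposition~\ref{pro3-1}, so we drop the details here,'' and your write-up supplies precisely those details. Your key observation---that since $d_k\ge 0$ is \emph{added} to $|D\varphi|$ rather than entering inside an absolute value like $|D\varphi+\eta|$, the lower bound $|D\varphi|+d_k\ge cL$ is automatic and the two-case splitting of Proposition~\ref{pro3-1} collapses to a single case---is exactly the simplification that makes the Lipschitz constant independent of $d_k$.
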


The proof of this lemma is almost the same as that of Proposition \ref{pro3-1}, so we drop the details here. We now present an important result on the existence of approximated viscosity solutions to Eq. \eqref{main}.

\begin{proposition}
\label{pro4-4}
Assume that the preconditions $(A_1)$--$(A_4)$ are in force. Then we can find at least one approximated viscosity solution of \eqref{main}.
\end{proposition}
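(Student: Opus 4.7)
The strategy is to realise the approximated viscosity solution as the locally uniform limit of viscosity solutions to a family of \emph{non-degenerate} regularised problems, and then invoke Definition \ref{def2} directly. Since Proposition \ref{pro4-4} is about \eqref{main} itself, I would set $\eta=0$ and $\eta_k=0$ for every $k$, and pick positive numbers $\{d_k\}$ with $d_k\to 0$ and $kd_k^{s_2}\to\infty$, for instance $d_k=k^{-1/(s_2+1)}$, which satisfies both requirements for every $s_2\ge 0$.

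First, for each fixed $k$, I would solve the Dirichlet-type problem
$$
-\Phi(x,|Du_k|+d_k)\mathcal{I}_\sigma(u_k,x)=f(x) \quad\text{in } B_1,\qquad u_k=g \quad\text{in } \mathbb{R}^N\setminus B_1,
$$
where $g$ is a fixed continuous function in $L^1_\sigma(\mathbb{R}^N)$ satisfying $|g(x)|\le\mu(1+|x|^{1+\alpha})$ for some $\mu>0$ and some $1+\alpha\in(0,\sigma)$. The crucial point is that $d_k>0$ removes the degeneracy: by $(A_3)$ and $(A_4)$ one has $\Phi(x,|Du_k|+d_k)\ge \overline{M}^{-1}M^{-1}d_k^{s_2}>0$, so the equation can be recast as the uniformly elliptic nonlocal equation
$$
-\mathcal{I}_\sigma(u_k,x)=\frac{f(x)}{\Phi(x,|Du_k|+d_k)},
$$
whose right-hand side is bounded (with a bound depending on $k$). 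For this non-degenerate problem both a comparison principle and Perron's method are available in the classical framework of \cite{BCI08}, producing a continuous viscosity solution $u_k\in C(\overline{B}_1)\cap L^1_\sigma(\mathbb{R}^N)$.

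Next I would extract uniform estimates. A uniform $L^\infty$ bound on $u_k$ in $B_1$ follows from the comparison principle applied with explicit radial barriers depending only on $\|f\|_{L^\infty(B_1)}$ and on the exterior datum $g$; these barriers can be chosen independently of $k$ because their construction is insensitive to the precise value of $d_k$. Lemma \ref{lem4-3} then furnishes a Lipschitz bound for $u_k$ on $B_{1/2}$ which is likewise independent of $k$, while the growth $|u_k(x)|\le\mu(1+|x|^{1+\alpha})$ on $\mathbb{R}^N$ is inherited from the exterior datum. By the Arzel\`a--Ascoli theorem, a subsequence converges locally uniformly in $B_1$ to some $u\in C(\overline{B}_1)\cap L^1_\sigma(\mathbb{R}^N)$. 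By Definition \ref{def2}, the function $u$, together with the sequences $\{u_k\}$, $\eta_k\equiv 0$, and $\{d_k\}$, is an approximated viscosity solution of \eqref{main}.

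The main obstacle in this outline is the first step: verifying the comparison principle and the existence by Perron's method for the regularised equation with the prescribed exterior behaviour, in particular keeping track of the $L^1_\sigma$ tails when building sub- and supersolutions. Once the non-degeneracy provided by $d_k>0$ reduces matters to a uniformly elliptic nonlocal equation with bounded source, this step is a routine application of classical nonlocal viscosity theory \cite{BCI08, CS09}, whereas compactness (via Lemma \ref{lem4-3}) and Definition \ref{def2} dispose of the remainder.
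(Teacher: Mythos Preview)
Your proposal is correct and follows essentially the same route as the paper: solve the regularised non-degenerate problems via the existence theory of \cite{BCI08}, use Lemma \ref{lem4-3} to obtain uniform local Lipschitz bounds, and pass to a locally uniform limit which is, by Definition \ref{def2}, an approximated viscosity solution. The only cosmetic differences are your choice $d_k=k^{-1/(s_2+1)}$ versus the paper's $d_k=k^{-1/(2s_2)}$ and your more explicit mention of the uniform $L^\infty$ bound via barriers, which the paper leaves implicit.
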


\begin{proof}
Introduce the approximation problem
\begin{equation*}
\begin{cases}-\Phi\left(x,|Du_k|+\left(\frac{1}{k}\right)^\frac{1}{2s_2}\right)\mathcal{I}(u_k,x)=f(x) \qquad \text{in } B_1, \\[2mm]
u_k(x)=g(x)  \qquad \text{on } \mathbb{R}^N\setminus B_1,
\end{cases}
\end{equation*}
where the boundary datum satisfies
$$
|g(x)|\le 1+|x|^{1+\alpha} \quad\text{for }  x\in\mathbb{R}^N
$$
with any $1+\alpha\in(0,\sigma)$. The existence of viscosity solution $u_k$ to the equation above follows from the paper \cite{BCI08}, because of the nonlocal operator $\mathcal{I}$ here carrying uniform ellipticity. Besides, recalling Lemma \ref{lem4-3}, we know that the viscosity solution $u_k$ is locally Lipschitz continuous in $B_1$ with the Lipschitz constant not depending on $k$. As a consequence, this guarantees that there is a function $u_\infty\in C^{0,\beta}_{\rm loc}(B_1)$ for some $\beta\in(0,1)$ such that $u_k$ converges locally uniformly to $u_\infty$ in $B_1$, and $u_\infty=g$ on $\mathbb{R}^N\setminus B_1$. Observe that by Definition \ref{def2} and via choosing $d_k=\left(\frac{1}{k}\right)^\frac{1}{2s_2}$ in \eqref{2-2} with $kd^{s_2}_k=k^\frac{1}{2}\rightarrow\infty$, we immediately see that $u_\infty$ is an approximated viscosity solution of Eq. \eqref{main}.
\end{proof}

In what follows, we are going to utilize the compactness argument to establish the approximation result below that is a crucial ingredient for showing Theorem \ref{thm3}.

\begin{lemma}
\label{lem4-5}
Suppose that the assumptions $(A_1)$--$(A_4)$ are satisfied. Let $u\in C(\overline{B}_1)$ be a normalized approximated viscosity solution of \eqref{2-1}. Then for given $\mu,\varepsilon>0$, there is a $\kappa>0$, depending on $N,\lambda,\Lambda,M,\overline{M},s_2,\sigma,\varepsilon,\mu$, such that if
$$
\|f\|_{L^\infty(B_1)}\le \kappa
$$
and
$$
|u(x)|\le \mu(1+|x|^{1+\hat{\alpha}})  \quad\text{for }  x\in\mathbb{R}^N
$$
with some $\hat{\alpha}\in(0,1)$, then there exists an $\mathcal{I}_\sigma$-harmonic function $h\in C^{1,\hat{\alpha}}_{\rm loc}(B_1)$ fulfilling
$$
\|u-h\|_{L^\infty(B_{3/4})}\le\varepsilon.
$$
\end{lemma}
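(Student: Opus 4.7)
\textbf{Proof proposal for Lemma \ref{lem4-5}.} The plan is to argue by contradiction, in the same spirit as in Lemma \ref{lem3-2}, but exploiting here the already-developed machinery for approximated viscosity solutions (Definition \ref{def2} together with Lemmas \ref{lem4-1}--\ref{lem4-3}). Suppose the claim fails; then there exist $\mu_0,\varepsilon_0>0$ and sequences $\{\Phi_k\}$, $\{f_k\}\subset C(B_1)\cap L^\infty(B_1)$, $\{\eta_k\}\subset\mathbb{R}^N$, and normalized approximated viscosity solutions $u_k\in C(\overline{B}_1)$ of
$$
-\Phi_k(x,|Du_k+\eta_k|)\mathcal{I}(u_k,x)=f_k(x) \quad \text{in } B_1,
$$
with $\|u_k\|_{L^\infty(B_1)}\le 1$, $\|f_k\|_{L^\infty(B_1)}\le 1/k$, and $|u_k(x)|\le\mu_0(1+|x|^{1+\hat{\alpha}})$, yet satisfying
$$
\|u_k-h\|_{L^\infty(B_{3/4})}>\varepsilon_0
$$
for every $\mathcal{I}_\sigma$-harmonic function $h\in C^{1,\hat{\alpha}}_{\rm loc}(B_1)$.

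The first step is to extract a locally uniform limit. Because each $u_k$ is an approximated viscosity solution, Definition \ref{def2} furnishes associated sequences $\{u_k^i\}$ of genuine viscosity solutions of the perturbed equation \eqref{4-1} (with $\Phi_k$ and parameter $d_i\to 0$). Lemma \ref{lem4-3} provides local Lipschitz estimates for $u_k^i$ whose constants do not depend on $d_i$; passing to the diagonal limit as $i\to\infty$ transfers these estimates to $u_k$ itself, yielding a uniform Lipschitz bound on $B_{3/4}$ that is moreover independent of $k$ (since the bound from Lemma \ref{lem4-3} depends only on the universal parameters and on $\|u_k\|_{L^\infty(B_1)}+\|u_k\|_{L^1_\sigma(\mathbb{R}^N)}+\|f_k\|_{L^\infty(B_1)}$, all uniformly controlled). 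Arzel\`a--Ascoli then gives, up to a subsequence, a continuous function $u_\infty$ on $\overline{B}_{3/4}$ such that $u_k\to u_\infty$ locally uniformly in $B_1$; the uniform growth bound $|u_k(x)|\le\mu_0(1+|x|^{1+\hat{\alpha}})$ passes to the limit and, together with $1+\hat{\alpha}<\sigma$, guarantees $u_\infty\in L^1_\sigma(\mathbb{R}^N)$.

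The second step is to identify the limit equation. Since $\{u_k\}$ is exactly the type of sequence treated by Lemma \ref{lem4-2} (with $\Phi_k$ satisfying $(A_3),(A_4)$ uniformly, $\|f_k\|_{L^\infty(B_1)}\le 1/k$, and $u_k\to u_\infty$ locally uniformly in $B_1$), we conclude that $u_\infty$ is a viscosity solution to
$$
\mathcal{I}_\sigma(u_\infty,x)=0 \quad \text{in } B_1,
$$
that is, an $\mathcal{I}_\sigma$-harmonic function in the sense of Caffarelli--Silvestre. Invoking the interior $C^{1,\hat{\alpha}}$-regularity theory for such functions (see \cite{CS09, CS11}), we obtain $u_\infty\in C^{1,\hat{\alpha}}_{\rm loc}(B_1)$ for a suitable $\hat{\alpha}\in(0,1)$. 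Choosing $h:=u_\infty$ contradicts $\|u_k-h\|_{L^\infty(B_{3/4})}>\varepsilon_0$ for all sufficiently large $k$, completing the proof.

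The step I expect to be most delicate is the second one: Lemma \ref{lem4-2} is formulated for approximated viscosity solutions of an equation whose right-hand side has the factor $\Phi_k(x,|Du_k+\eta_k|)$, and the verification that the limit is $\mathcal{I}_\sigma$-harmonic must handle both the regime in which $|\eta_k|\to\infty$ (where $\Phi_k(x,|D\varphi+\eta_k|)\to\infty$ kills the right-hand side) and the regime in which $\eta_k$ is bounded but $|D\varphi+\overline{\eta}|$ may vanish at a test-function touching point — this latter case requires the projection-plus-perturbation trick used in the proof of Lemma \ref{lem3-2}, adapted to the nonlocal operator $\mathcal{I}_\sigma$. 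The compactness argument in the first step is technically routine but depends crucially on the $\sigma$-independence (for $\sigma>1$) of the Lipschitz constants produced by Proposition \ref{pro3-1}/Lemma \ref{lem4-3}.
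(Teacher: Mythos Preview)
Your proof is correct and follows essentially the same route as the paper: contradiction, uniform Lipschitz bounds to extract a limit $u_\infty$, then Lemma~\ref{lem4-2} to identify $u_\infty$ as $\mathcal{I}_\sigma$-harmonic, yielding the contradiction via $h:=u_\infty$. The paper obtains the Lipschitz estimate slightly more directly (Lemma~\ref{lem4-1} shows each $u_k$ is itself a viscosity solution of \eqref{2-1}, so Proposition~\ref{pro3-1}/Lemma~\ref{lem4-3} applies to $u_k$ without passing through the approximants $u_k^i$), but your diagonal argument is equivalent.

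Your closing worry, however, is misplaced and worth correcting: the projection-plus-perturbation trick from Lemma~\ref{lem3-2} is \emph{not} needed here, and this is precisely the point of the approximated-solution framework. In Lemma~\ref{lem4-2} one works with the diagonal sequence $u_k^k$ solving $-\Phi_k(x,|Du_k^k+\eta_k^k|+d_k)\mathcal{I}(u_k^k,x)=f_k$; the strictly positive shift $d_k>0$ guarantees $\Phi_k(x,|D\varphi+\eta_k^k|+d_k)\ge (M\overline{M})^{-1}d_k^{s_2}$ at any test point, so the right-hand side is controlled by $(M\overline{M})/(k d_k^{s_2})\to 0$ regardless of whether the gradient vanishes. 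No case analysis on $|D\varphi+\overline{\eta}|$ is required.
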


\begin{proof}
Argue by contradiction. On the contrary, we suppose this statement fails. Then, there exist $\mu_0,\varepsilon_0>0$ for which we can find the sequences of functions $\{f_k\}$, $\{\Phi_k\}$ and $\{u_k\}\subset C(\overline{B}_1)\cap L^1_\sigma(\mathbb{R}^N)$, a sequence of vectors $\{\eta_k\}$, such that $u_k$ is an approximated viscosity solution of
\begin{equation}
\label{4-5-1}
-\Phi_k(x,|Du_k+\eta_k|)\mathcal{I}(u_k,x)=f_k(x) \quad \text{in } B_1,
\end{equation}
where
$$
\|f_k\|_{L^\infty(B_1)}\le \frac{1}{k}
$$
and
$$
\|u_k\|_{L^\infty(B_1)}\le1, \quad |u_k(x)|\le \mu_0(1+|x|^{1+\hat{\alpha}})  \quad\text{for }  x\in\mathbb{R}^N.
$$
Here the continuous function $\Phi_k(x,t)$ satisfies that $\overline{M}^{-1}\le \Phi_k(x,1)\le \overline{M}$ in $B_1$, and the map $t\rightarrow\frac{\Phi_k(x,t)}{t^{s_1}}$ is almost non-decreasing and the map $t\rightarrow\frac{\Phi_k(x,t)}{t^{s_2}}$ is almost non-increasing with the same constant $M\ge1$ in $(A_4)$. Nonetheless,
\begin{equation}
\label{4-5-2}
\|u_k-h\|_{L^\infty(B_{3/4})}>\varepsilon_0
\end{equation}
for any function $h\in C^{1,\hat{\alpha}}_{\rm loc}(B_1)$.

We have known from Lemma \ref{lem4-1} that the approximated viscosity solutions of \eqref{4-5-1} are viscosity solutions of such equation as well. In turn, making use of Lemma \ref{lem4-3}, we could find such a continuous function $u_\infty$ that $u_k$ converges uniformly to $u_\infty$ locally in $B_1$ by compactness. At this stage, it yields using the stability of approximated viscosity solutions (Lemma \ref{lem4-2}) and the preceding assumptions that $u_\infty$ is a viscosity solution of
$$
-\mathcal{I}_\sigma(u_\infty,x)=0 \quad \text{in }  B_\frac{7}{8}
$$
from \eqref{4-5-1}. Here it is well known that $u_\infty$ is of class $C^{1,\hat{\alpha}}_{\rm loc}(B_{7/8})$. Hence there is a contradiction with \eqref{4-5-2} by selecting $h:=u_\infty$ for $k$ large enough.
\end{proof}

\begin{lemma}
\label{lem4-6}
Let $u\in C(\overline{B}_1)$ be a normalized approximated viscosity solution of \eqref{main}. Under the assumptions of Lemma \ref{lem4-5}, for given $\mu>0$, one can find a $\kappa>0$, depending on $N,\lambda,\Lambda,M,\overline{M},s_2,\sigma,\alpha,\mu$, such that whenever
$$
\|f\|_{L^\infty(B_1)}\le \kappa
$$
and
$$
|u(x)|\le \mu(1+|x|^{1+\hat{\alpha}})  \quad\text{for }  x\in\mathbb{R}^N
$$
with $\hat{\alpha}$ from Lemma \ref{lem4-5}, then there is a constant $\rho\in\left(0,\frac{1}{2}\right)$ depending only on $N,\lambda,\Lambda,\alpha,\sigma$ and a sequence of affine functions $\{l_k\}$ with the form $l_k(x):=a_k+p_k\cdot x$ such that
$$
\sup_{x\in B_{\rho^k}}|u(x)-l_k(x)|\le\rho^{k(1+\alpha)},
$$
where
\begin{equation}
\label{4-6-1}
|a_{k+1}-a_k|+\rho^k|p_{k+1}-p_k|\le C\rho^{k(1+\alpha)}
\end{equation}
for any
$$
\alpha\in(0,\hat{\alpha})\cap\left(0,\frac{\sigma-1}{1+s_2}\right].
$$
Here the constant $C\ge1$ depends on $N,\lambda,\Lambda,\sigma$.
\end{lemma}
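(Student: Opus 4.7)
The strategy mirrors that of Lemma \ref{lem3-3}: first establish the base step from the approximation Lemma \ref{lem4-5}, then iterate by rescaling and invoking induction, while simultaneously controlling the growth of the rescaled solutions at infinity so that Lemma \ref{lem4-5} remains applicable at every scale.

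\textbf{Base step.} Assuming without loss of generality $\Phi(x,1)=1$ (otherwise normalize by $\Phi(x,1)$), I would apply Lemma \ref{lem4-5} with $\varepsilon$ to be chosen to obtain an $\mathcal{I}_\sigma$-harmonic function $h\in C^{1,\hat{\alpha}}_{\rm loc}(B_1)$ with $\|u-h\|_{L^\infty(B_{3/4})}\le\varepsilon$ and $|h(0)|+|Dh(0)|\le C(N,\lambda,\Lambda,\sigma)$. The regularity of $\mathcal{I}_\sigma$-harmonic functions from \cite{CS09,CS11} gives
$$
\sup_{x\in B_\rho}|h(x)-(h(0)+Dh(0)\cdot x)|\le C\rho^{1+\hat{\alpha}}
$$
for all $\rho\in(0,1)$. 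Setting $l_1(x):=h(0)+Dh(0)\cdot x$ and selecting $\rho\le(2C)^{-1/(\hat{\alpha}-\alpha)}$ together with $\varepsilon:=\tfrac12\rho^{1+\alpha}$ yields $\sup_{B_\rho}|u-l_1|\le\rho^{1+\alpha}$, providing both $\rho$ (universal) and the required $\kappa$.

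\textbf{Inductive step.} Assume the conclusion holds for $k=1,\dots,n$ and consider the rescaling
$$
u_n(x):=\frac{u(\rho^n x)-l_n(\rho^n x)}{\rho^{n(1+\alpha)}},\qquad x\in\mathbb{R}^N.
$$
A direct computation shows $u_n$ is an approximated viscosity solution of
$$
-\overline{\Phi}(x,|Du_n+\rho^{-n\alpha}p_n|)\overline{\mathcal{I}}(u_n,x)=\overline{f}(x)\quad\text{in }B_1,
$$
with $\overline{\Phi}(x,t)=\Phi(\rho^n x,\rho^{n\alpha}t)/\Phi(\rho^n x,\rho^{n\alpha})$ and $\overline{f}(x)=\rho^{n(\sigma-1-\alpha)}\Phi(\rho^n x,\rho^{n\alpha})^{-1}f(\rho^n x)$. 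The operator $\overline{\mathcal{I}}$ satisfies the same uniform ellipticity as $\mathcal{I}$ (cf.\ the scaling discussion in \cite{CS11}), and $\overline{\Phi}$ inherits $\overline{\Phi}(x,1)=1$ along with the almost-monotonicity hypotheses of $(A_4)$ with the same constants $s_1,s_2,M$. Using $\alpha\le(\sigma-1)/(1+s_2)$ gives $\sigma-1-\alpha\ge s_2\alpha$, so
$$
\|\overline{f}\|_{L^\infty(B_1)}\le\frac{\rho^{n(\sigma-1-\alpha)}}{M^{-1}\rho^{n s_2\alpha}}\|f\|_{L^\infty(B_1)}\le M\kappa,
$$
which can be made smaller than the $\kappa$ of Lemma \ref{lem4-5}.

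\textbf{Growth at infinity and closing the induction.} The core technical point — and the step I expect to require the most care — is verifying that the rescaled solution still satisfies a universal growth bound $|u_n(x)|\le 1+|x|^{1+\hat{\alpha}}$ on $\mathbb{R}^N$, since Lemma \ref{lem4-5} is stated under precisely this assumption. I would argue by a secondary induction, splitting into the regions $\rho|x|\le\tfrac12$ and $\rho|x|>\tfrac12$ exactly as in the proof of Lemma \ref{lem3-3}: on the first region one uses the approximation to the $\mathcal{I}_\sigma$-harmonic function produced in the previous step, and on the second region one uses the inductive growth bound on $u_{n-1}$ together with an estimate $|\widetilde{l}(x)|\le C(1+|x|)$ on the affine corrector. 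The choice of $\rho$ is then finalized as
$$
\rho:=\tfrac12\min\!\left\{(2C)^{-1/(\hat{\alpha}-\alpha)},\,(10(1+C))^{-1/(\hat{\alpha}-\alpha)}\right\}.
$$
Once the growth bound is secured, applying the base step to $u_n$ produces an affine function $\widetilde{l}(x)=\widetilde{a}+\widetilde{p}\cdot x$ with $|\widetilde{a}|+|\widetilde{p}|\le C(N,\lambda,\Lambda,\sigma)$ and $\sup_{B_\rho}|u_n-\widetilde{l}|\le\rho^{1+\alpha}$. Scaling back, $l_{n+1}(x):=l_n(x)+\rho^{n(1+\alpha)}\widetilde{l}(\rho^{-n}x)$ fulfils the desired inequalities, and the coefficient estimate \eqref{4-6-1} follows from the bound on $|\widetilde{a}|$ and $|\widetilde{p}|$. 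This completes the induction.
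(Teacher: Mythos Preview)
Your proposal is correct and follows essentially the same approach as the paper's own proof, which explicitly refers back to Lemma \ref{lem3-3} and omits the details you have carefully filled in. In fact you are slightly more thorough than the paper in noting that the rescaling $u_n$ remains an \emph{approximated} viscosity solution (so that Lemma \ref{lem4-5} applies), and in spelling out the growth verification and the final choice of $\rho$.
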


\begin{proof}
The proof of this lemma is analogous to that of Lemma \ref{lem3-3}, so we sketch it here.

\medskip

\textbf{Step 1}. First, we claim that there exists an affine map $l(x)$ and a number $0<\rho<\frac{1}{2}$ such that
$$
\sup_{x\in B_\rho}|u(x)-l(x)|\le \rho^{1+\alpha}.
$$
For $\varepsilon>0$ to be determined a posteriori, we suppose $h$ is an $\mathcal{I}_\sigma$-harmonic function that is $\varepsilon$-close to $u$ in $L^\infty(B_{3/4})$ (i.e., $\|u-h\|_{L^\infty(B_{3/4})}\le \varepsilon$). Owing to Lemma \ref{lem4-5}, the existence of such function $h$ could be assured, provided $\kappa>0$ is sufficiently small.

With the help of the $C^{1,\hat{\alpha}}$-regularity of $h$, there is an absolute constant $C\equiv C(N,\lambda,\Lambda,\sigma)$ satisfying
$$
\sup_{x\in B_\rho}|h(x)-(h(0)+Dh(0)\cdot x)|\le C\rho^{1+\hat{\alpha}}
$$
with
$$
|h(0)|+|Dh(0)|\le C.
$$
Set an affine function $l(x):=a+p\cdot x:=h(0)+Dh(0)\cdot x$. Via the triangular inequality,
\begin{align*}
\sup_{x\in B_\rho}|u(x)-l(x)|&\le\sup_{x\in B_\rho}|u(x)-h(x)|+\sup_{x\in B_\rho}|h(x)-l(x)|\\
&\le \varepsilon+C\rho^{1+\hat{\alpha}}\\
&\le\rho^{1+\alpha},
\end{align*}
where we have taken $\rho=\left(\frac{1}{2C}\right)^\frac{1}{\hat{\alpha}-\alpha}$ and $\varepsilon=\frac{1}{2}\rho^{1+\alpha}$. The universal choice of $\varepsilon$ determines the value of $\kappa$ by Lemma \ref{lem4-5}.

\medskip

\textbf{Step 2}. We proceed by induction. Suppose this assertion holds for $k=1,2,\cdots,n$. Next, we prove this in the case $k=n+1$. Define a function $u_n:\mathbb{R}^N\rightarrow \mathbb{R}$ as
$$
u_n(x):=\frac{u(\rho^nx)-l_n(\rho^nx)}{\rho^{n(1+\alpha)}}.
$$
Hereafter, the subsequent processes are the same as the proof of Step 2 in Lemma \ref{lem3-3} virtually, so we omit the details here.
\end{proof}

\noindent\textbf{Proof of Theorem \ref{thm3}}. Since the sequences $\{a_k\}, \{p_k\}$ are Cauchy sequences from \eqref{4-6-1}, then we may pick $a_\infty\in\mathbb{R}$, $p_\infty\in\mathbb{R}^N$ such that
$$
|a_k-a_\infty|+|p_k-p_\infty|\rightarrow0 \qquad (k\rightarrow\infty),
$$
and further
$$
|a_k-a_\infty|+\rho^k|p_k-p_\infty|\le C\rho^{k(1+\alpha)}.
$$
For any $r\in\Big(0,\frac{1}{2}\Big]$, select $k\in\mathbb{N}$ such that $\rho^{k+1}<r\le \rho^k$ with $\rho$ a fixed sufficiently small number. Then, let $l_\infty(x)=a_\infty+p_\infty\cdot x$ and we evaluate
\begin{align*}
\sup_{x\in B_r}|u(x)-l_\infty(x)|&\le\sup_{x\in B_{\rho^k}}|u(x)-l_k(x)|+\sup_{x\in B_{\rho^k}}|l_k(x)-l_\infty(x)|\\
&\le C\rho^{-(1+\alpha)}\rho^{(k+1)(1+\alpha)}\\
&\le Cr^{1+\alpha},
\end{align*}
which indicates $u$ is of class $C^{1,\alpha}$; see for instance \cite{BPRT20}. So far, we have demonstrated the approximated viscosity solutions are locally $C^{1,\alpha}$-regular. On the other hand, employing Proposition \ref{pro4-4}, the existence of such solutions can be guaranteed. Moreover, it follows from Lemma \ref{lem4-1} that approximated viscosity solutions are viscosity solutions as well. As a result, we verify the existence of viscosity solution with $C^{1,\alpha}_{\rm loc}(B_1)$-regularity to \eqref{main}.

\section{H\"{o}lder continuity of solutions}
\label{sec4}

We in this section provide the proof of local H\"{o}lder regularity for viscosity solutions to \eqref{main} by using  Ishii-Lions argument again and elliptic estimates in the direction of gradient shown in \cite{BCCI12, BCI11}.

\medskip

\noindent\textbf{Proof of Theorem \ref{thm4}.}
(1) The proof of this theorem is analogous to that of Proposition \ref{pro3-1}, so we profile it and follow the notations there. Especially, we stress that for $t\in[0,+\infty)$
$$
\omega(t)=Lt^\sigma
$$
with $\sigma\in(0,1)$ at this time. For the function with ``double variables"
$$
\Psi(x,y)=u(x)-u(y)-\varphi(x,y),
$$
it attains maximum in $\overline{B}_1\times\overline{B}_1$ at $(\overline{x},\overline{y})$ by continuity. In the sequel, we focus on demonstrating $\Psi(\overline{x},\overline{y})\le0$ via arguing by contradiction. Suppose $\Psi(\overline{x},\overline{y})>0$. Then we can obtain the following information:
$$
\overline{x}\neq\overline{y}, \ \ \overline{y}\in B_\frac{3}{4} \ \ \text{and} \ \ |\overline{x}-\overline{y}|\le\left(L^{-1}\mathop{\rm{osc}}\limits_{B_1}u\right)^{\sigma^{-1}}
$$
and further
$$
|\overline{x}|\le \frac{3}{4}+\left(L^{-1}\mathop{\rm{osc}}\limits_{B_1}u\right)^{\sigma^{-1}}\le \frac{7}{8}
$$
via selecting $L\ge 8^\sigma\mathop{\rm{osc}}\limits_{B_1}u$.

Through applying the fact that $\Psi$ has its maximum at $(\overline{x},\overline{y})\in\overline{B}_1\times\overline{B}_1$ and the definition of the solution, we arrive at the viscosity inequalities
$$
-\Phi(\overline{x},|D_x\varphi(\overline{x},\overline{y})|)\mathcal{I}^\delta(u,\varphi(\cdot,\overline{y}),\overline{x})\le f(\overline{x})
$$
and
$$
-\Phi(\overline{y},|-D_y\varphi(\overline{x},\overline{y})|)\mathcal{I}^\delta(u,-\varphi(\overline{x},\cdot),\overline{y})\ge f(\overline{y})
$$
for each $\delta\in(0,1)$. Compute the partial derivative of $\varphi$,
\begin{equation*}
 \begin{cases}
D_x\varphi(\overline{x},\overline{y})=\sigma L|\overline{x}-\overline{y}|^{\sigma-2}(\overline{x}-\overline{y}), \\[2mm]
D_y\varphi(\overline{x},\overline{y})=-\sigma L|\overline{x}-\overline{y}|^{\sigma-2}(\overline{x}-\overline{y})+Dh(\overline{y}).
\end{cases}
\end{equation*}
Furthermore, we can find that, by taking such large $L\ge1$ that $C(N)\Big(\mathop{\rm{osc}}\limits_{B_1}u+1\Big)\le2^{\sigma-2}\sigma L$,
\begin{equation*}
\begin{cases}
|D_x\varphi(\overline{x},\overline{y})|=\sigma L|\overline{x}-\overline{y}|^{\sigma-1}\ge2^{\sigma-1}\sigma L \\[2mm]
|D_y\varphi(\overline{x},\overline{y})|\ge \sigma L|\overline{x}-\overline{y}|^{\sigma-1}-\Big(\mathop{\rm{osc}}\limits_{B_1}u+1\Big)|D\psi(\overline{y})|\ge 2^{\sigma-1}\sigma L-C(N)\Big(\mathop{\rm{osc}}\limits_{B_1}u+1\Big)\ge2^{\sigma-2}\sigma L.
\end{cases}
\end{equation*}
Based on the structural conditions on $\Phi$ and the lower bound on $|D_x\varphi(\overline{x},\overline{y})|$ or $|D_y\varphi(\overline{x},\overline{y})|$, we can infer
$$
-\mathcal{I}^\delta(u,\varphi(\cdot,\overline{y}),\overline{x})\le\frac{\|f\|_{L^\infty(B_1)}}{\Phi(\overline{x},|D_x\varphi(\overline{x},\overline{y})|)}
\le \frac{C\|f\|_{L^\infty(B_1)}}{(2^{\sigma-2}\sigma L)^{s_1}}
$$
and
$$
-\mathcal{I}^\delta(u,-\varphi(\overline{x},\cdot),\overline{y})\ge-\frac{\|f\|_{L^\infty(B_1)}}{\Phi(\overline{y},|D_y\varphi(\overline{x},\overline{y})|)}
\ge-\frac{C\|f\|_{L^\infty(B_1)}}{(2^{\sigma-2}\sigma L)^{s_1}},
$$
that is,
$$
\mathcal{I}^\delta(u,\varphi(\cdot,\overline{y}),\overline{x})-\mathcal{I}^\delta(u,-\varphi(\overline{x},\cdot),\overline{y})\ge-\frac{C\|f\|_{L^\infty(B_1)}}{L^{s_1}},
$$
where the constant $C>0$ depends on $\sigma, s_1,N,M,\overline{M}$.

When evaluating the nonlocal term on the left-hand side of the last display, we can use the estimates of $T_2$ in Step 3 of Theorem 1-(i) in \cite{BCI11}. Then we choose $L$ sufficiently large to obtain a contradiction. Then the H\"{o}lder continuity is proved, and the upper bound on H\"{o}lder norm is indeed the $L$. Here let us mention that the subsequent procedure is the same as that in Proposition \ref{pro3-1}.

\smallskip

(2) For the case $\sigma=1$, the proof is very similar to that in the scenario $\sigma\in(0,1)$. It is worth stressing that we need to introduce the auxiliary function $\omega(t)=Lt^\alpha$ with any $\alpha\in(0,1)$.   \hfill $\Box$


\section*{Acknowledgements}
This work was supported by the National Natural Science Foundation of China (Nos. 12071098, 11871134) and the Young talents sponsorship program of Heilongjiang Province (No. 2023QNTJ004). 

\section*{Declarations}
\subsection*{Conflict of interest} The authors declare that there is no conflict of interest. We also declare that this
manuscript has no associated data.

\subsection*{Data availability} Data sharing is not applicable to this article as no datasets were generated or analysed
during the current study.

\end{document}